\theoremstyle{plain}
\newtheorem{lemma}[subsection]{Lemma}
\newtheorem{corollary}[subsection]{Corollary}
\theoremstyle{definition}
\newtheorem{definition}[subsection]{Definition}
\theoremstyle{remark}
\newtheorem{remark}[subsection]{Remark}
\numberwithin{equation}{section}
\newcommand{\cal}{\mathcal}
\newcommand{\cla}{{\cal A}}
\newcommand{\clb}{{\cal B}}
\newcommand{\cld}{{\cal D}}
\newcommand{\cle}{\cal{E}}
\newcommand{\clg}{{\cal G}}
\newcommand{\clh}{{\cal H}}
\newcommand{\cll}{{\cal L}}
\newcommand{\der}{\textup{d}}
\newcommand{\nquad}{\!\!\!}
\def\a*{{\cal A}_{h,*}}
\def\B{{\cal B}(h)}
\def\B1{{\cal B}_1(h)}
\def\b{{\cal B}^{\rm s.a.}(h)}
\def\b1{{\cal B}^{\rm s.a.}_1(h)}
\newcommand{\ot}{ \ \otimes \ }
\newcommand{\raro}{\rightarrow}
\def\a*{{\cal A}_{h,*}}
\def\B1{{\cal B}_1(h)}
\def\b{{\cal B}^{\rm s.a.}(h)}
\def\b1{{\cal B}^{\rm s.a.}_1(h)}
\begin{document}

\title[]{Twisted Edge Laplacians on finite graphs from a K\"{a}hler structure}
\author[Joardar]{Soumalya Joardar}
\address{Department of Mathematics and Statistics, Indian Institute of Science Education and Research Kolkata, Mohanpur - 741246, West
	Bengal, India}
\email{soumalya@iiserkol.ac.in}
\author[Rahaman]{Atibur Rahaman}
\address{Department of Mathematics and Statistics, Indian Institute of Science Education and Research Kolkata, Mohanpur - 741246, West
	Bengal, India}
\email{atibur.pdf@iiserkol.ac.in}
\subjclass{46L87, 05C25}
\begin{abstract}
  This is a continuation of the work done by the authors in \cite{Sou_Ati1}. In \cite{Sou_Ati1} an almost complex structure on finitely many points from bidirected polygon was introduced. In this paper we study a K\"{a}hler structure on finite points. In particular, we study the edge Laplacian of a graph twisted by the K\"{a}hler structure introduced in this paper. We also discuss a metric aspect from a twisted holomorphic Dolbeault-Dirac spectral triple and show that the points have a finite diameter with respect to Connes' distance.   
\end{abstract}
\maketitle
\section{Introduction}
One of the exciting problems in recent times in the context of noncommutative geometry has been to extend the classical complex geometric ideas into the realm of noncommutative geometry. Although the interplay between the algebraic and analytic aspects of noncommutative complex geometry is far from well understood, the study of noncommutative complex geometry has gained some momentum in recent years. The axioms have been mainly made keeping the quantum flag manifolds in sight. The reader can see \cites{Kahler_Buachalla,Somberg, Beggs} for familiarizing himself/herself with such axioms and set up. Although it is worth mentioning that there are other examples and approaches to noncommuatative complex geometry (see \cites{Khalkhali-NCGQspace, Schwartz, Staford, NCTori} for example). After setting up basic complex geometric notions in the realm of the noncommutative geometry, keeping the success of the K\"{a}hler structure in classical complex geometry in mind, the noncommutative K\"{a}hler structure has been explored mainly in the context of quantum flag manifolds by various authors (see \cites{Kahler_Buachalla, Das_Kahler, Kodaira} among many others). It is worth mentioning that whether metrics have a key role to play in noncommutative complex geometry is still not very clear. The study of noncommutative complex geometry being in its relatively early days, it has become imperative to construct new examples to test any hypothesis. The authors of this paper have recently explored an almost complex structure on finitely many points in \cite{Sou_Ati1}. There it has been established that given a bidirected graph over finitely many points, there is a natural almost complex structure associated to it. Furthermore such almost complex structure is naturally integrable. Then a concrete example of such an almost complex structure was studied where the bomodule of one-forms came from a bidirected polygon. In this paper we continue exploring the almost complex structure and we introduce a K\"{a}hler structure. But we don't discuss the Hodge theory here. Rather we study a twisted holomorphic Dolbeault-Dirac operator on a holomorphic hermitian module and an associated twisted edge Laplacian in the spirit of \cite{Das_Kahler}. At this point it is worth mentioning that an almost complex structure has also been explored on a three point space using the spectral triple formalism in \cite{debashish}. But unfortunately there is no compatible K\"{a}hler structure in their set up. Coming back to the twisted Laplacian, the twist arises from a left module on finite points which comes from a directed graph on the points. It is shown that the left module coming from a directed graph is a finitely generated projective module admitting a canonical hermitian structure (see Lemma \ref{Hermitian}). A general holomorphic structure on such a module is indexed by a class of left module maps (see the discussion following Corollary \ref{hermitianpositive}). one can restrict the Laplacian on the corresponding left module and the left module being the function space over the edges, can rightfully termed as a twisted edge Laplacian on the corresponding graph. We explore the spectrum of such a twisted edge Laplacian for regular graphs, in particular regular $k$-gons. Then we go on to consider a twisted holomorphic Dolbeaut-Dirac operator coming from a holomorphic hermitian module. Due to the finite dimensionality of the Hilbert spaces involved the adjointabilty of such a Dirac operator (any other such analytical issue) does not pose any problem and we are content with considering Connes' distance between points coming from the twisted Dolbeault Dirac spectral triple. In particular we show that $n$-many points have finite diameter$\leq[\frac{n}{2}]$ (Corollary \ref{diameter}).   
\section{K\"{a}hler structure on finite points from polygon graph}
We start off by briskly recalling the rudiments of noncommutative complex geometry. We mostly follow the definitions and conventions of \cites{Somberg, Sou_Ati1, Kahler_Buachalla, Kodaira}.
\begin{definition}(\cite{Khalkhali})
Let $\cla$ be a $\ast$-algebra over the field of complex numbers $\mathbb{C}$.
Then a $\ast$-differential calculus over \(\cla\) is a datum $(\Omega^{\bullet}(\cla),\der,\ast)$  where
\begin{enumerate}[(i)]
    \item $\Omega^{\bullet}(\cla)=\oplus_{n\geq 0}\Omega^{n}(\cla)$ is a graded $\ast$-algebra such that $\Omega^{\bullet}(\cla)$ is an $\cla$\ndash$\cla$-bimodule with $\Omega^{0}(\cla)=\cla$;
    \item $\der:\Omega^{\bullet}(\cla)\raro\Omega^{\bullet+1}(\cla)$ is a graded derivation i.e. $\der$ is a $\mathbb{C}$-linear map satisfying the graded Leibniz rule
    \begin{displaymath}
    \der(\omega\wedge\eta)=\der\omega\wedge\eta+(-1)^{{\rm deg}(\omega)}\omega\wedge \der\eta,
    \end{displaymath}
    for $\omega\in \Omega^{n}(\cla),\eta\in \Omega^{\bullet}(\cla)$ and $\der^2=0$;
    \item for all $\omega\in\Omega^{\bullet}(\cla)$, $\der(\omega^{\ast})=(\der\omega)^{\ast}$.
\end{enumerate}
\end{definition} 
\begin{remark}
    As usual practice, we have denoted the product of the graded product by $\wedge$ and with respect to the product $\wedge$, $\ast$ is antilinear graded involution meaning that
    \begin{displaymath}
        (\omega\wedge\eta)^{\ast}=(-1)^{{\rm deg}(\omega){\rm deg}(\eta)}\eta^{\ast}\wedge\omega^{\ast}.
    \end{displaymath}
\end{remark}
\begin{definition}
Let $(\Omega^{\bullet}(\cla),\der,\ast)$ be a $\ast$-differential calculus over a $\ast$-algebra $\cla$. An almost complex structure on $(\Omega^{\bullet}(\cla),\der,\ast)$ is a degree zero derivation $J:\Omega^{\bullet}(\cla)\rightarrow\Omega^{\bullet}(\cla)$ such that
\begin{enumerate}[1.]
    \item $J$ is identically zero on $\cla$ and hence an $\cla$\ndash$\cla$-bimodule endomorphism of $\Omega^{\bullet}(\cla)$;
    \item $J^{2}=-{\rm Id}$ on $\Omega^{1}(\cla)$; and
    \item $J(\omega^{\ast})=(J\omega)^{\ast}$ for all $\omega\in\Omega^{1}(\cla)$.
\end{enumerate}
\end{definition}
Given an almost complex structure on a $\ast$-differential calculus, the bimodule of one-forms has the following decomposition into eigenspaces of $J$:\begin{displaymath}
\Omega^{1}(\cla)=\Omega^{1,0}(\cla)\oplus\Omega^{0,1}(\cla),
\end{displaymath}
where $\Omega^{1,0}(\cla):=\{\omega\in\Omega^{1}(\cla): J(\omega)=i\omega\}$, $\Omega^{0,1}:=\{\omega\in\Omega^{1}(\cla): J(\omega)=-i\omega\}$. $\Omega^{1,0}(\cla)$ and $\Omega^{0,1}(\cla)$ are again $\cla$\ndash$\cla$-bimodules and called the bimodules of $(1,0)$ and $(0,1)$-forms respectively. More generally, one has the higher dimensional analogues of the $(1,0)$ and $(0,1)$-forms. For a fixed $n$, the bimodule $\Omega^{n}(\cla)$ has the following decomposition into bimodules of $(p,q)$-forms for $p+q=n$, where $p,q$ are non negative integers(\cite{Beggs}*{Sec. 2.5}): 
\begin{displaymath}
\Omega^{n}(\cla)=\bigoplus_{p+q=n}\Omega^{p,q}(\cla),
\end{displaymath}
where $\Omega^{p,q}(\cla):=\{\omega\in\Omega^{n}(\cla):J(\omega)=i(p-q)\omega\}$. In the following definition we are going to use the Lefschetz operators. For the definition of the Lefschetz operators the reader is referred to \cite{Kahler_Buachalla} (see Definition 4.1).
\begin{definition}
    Given an almost complex structure $(\Omega^{\bullet}(\cla), \der,\ast, J)$, a K\"{a}hler form $\kappa$ is a central closed $(1,1)$ form such that with respect to Lefschetz map $L:\Omega^{\bullet}(\cla)\raro\Omega^{\bullet}(\cla)$ given by $\omega\mapsto \kappa\wedge\omega$, isomorphisms are given by
    \begin{displaymath}
        L^{n-k}: \Omega^{k}\raro\Omega^{2n-k}, \ for \ all \ 0\leq k< n.
    \end{displaymath}
\end{definition}
Given a K\"{a}hler-structure with the associated K\"{a}hler-form $\kappa$, one can define the Hodge $\ast$-map $\ast_{\kappa}$ which we do not recall here. The reader is referred to Section 2.3 of \cite{Das_Kahler} for details of the Hodge $\ast$-map on a K\"{a}hler structure. Now a metric can be defined using the Hodge star map.
\begin{definition}
    \label{metric}
    The canonical Hermitian metric associated to the K\"{a}hler structure $(\Omega^{\bullet}(\cla),\der,J,\kappa)$ is the unique map $g_{\kappa}:\Omega^{\bullet}(\cla)\times\Omega^{\bullet}(\cla)\raro\cla$ such that $g_{\kappa}(\Omega^{k},\Omega^{l})=0$ for $k\neq l$ and 
    \begin{displaymath}
        g_{\kappa}(\omega,\eta)=\ast_{\kappa}(\omega\wedge \ast_{\kappa}(\eta^{\ast})).
    \end{displaymath}
    The Hermitian metric $g_{\kappa}$ is said to be {\it positive definite} if $g_{\kappa}(\omega,\omega)$ belongs to the positive cone $\cla^{+}:={\rm Sp}\{a^{\ast}a:a\in\cla\}$ for all $\omega\in\Omega^{\bullet}(\cla)$.
\end{definition}
We are interested in a K\"{a}hler structure on finitely many points. For that purpose we recall the almost complex structure coming from the bidirected polygon on $n$-many vertices from \cite{Sou_Ati1}. We follow the notations and conventions as in \cite{Sou_Ati1}. So we label the vertices by natural numbers modulo $n$. We denote the function on $E$ which takes the value $1$ on an edge $\mu\raro\nu$ and zero on the rest of the vertices by $\xi_{\nu\raro\nu}$. Then recall that with respect to a canonical prolongation procedure using a bimodule map $\sigma$, the differential calculus coming from the polygon is a $2$-dimensional orientable calculus. It has an almost complex structure which determines the following spaces of forms:
\begin{eqnarray*}
    &&\Omega^{1,0}(\cla)={\rm Sp}_{\mathbb{C}}\{\xi_{\mu\raro\mu+1}\}_{\mu=1,2,\ldots,n}\\
    &&\Omega^{0,1}(\cla)={\rm Sp}_{\mathbb{C}}\{\xi_{\mu+1\raro\mu}\}_{\mu=1,2,\ldots,n}\\
    &&\Omega^{2,0}(\cla)=\Omega^{0,2}(\cla)=0\\
    &&\Omega^{1,1}(\cla)={\rm Sp}_{\mathbb{C}}\{\xi_{\mu\raro\mu-1}\wedge\xi_{\mu-1\raro\mu}\}_{\mu=1,2,\ldots,n}.
\end{eqnarray*}
\begin{lemma}
    \label{Kahler form}
    On the almost complex structure on finitely many points coming from the bidirected polygon graph the form $\kappa=i\sum_{\mu\in V}\xi_{\mu\raro\mu-1}\wedge\xi_{\mu-1\raro\mu} $ is a K\"{a}hler form. 
\end{lemma}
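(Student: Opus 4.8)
The plan is to verify the four requirements in the definition of a K\"ahler form in turn, exploiting that the polygon calculus has top degree $2$ and hence complex dimension $1$. This last observation is the key simplification: in the hard-Lefschetz condition $L^{n-k}\colon\Omega^{k}\raro\Omega^{2n-k}$ one has $n=1$, so the only index in the range $0\le k<n$ is $k=0$, and the entire Lefschetz requirement collapses to showing that the single map $L\colon\Omega^{0}(\cla)=\cla\raro\Omega^{2}(\cla)$, $a\mapsto\kappa\wedge a$, is an isomorphism. Throughout I would keep the two roles of the letter $n$ strictly apart: the number of vertices (over which the sum defining $\kappa$ runs) versus the complex dimension $1$ of the calculus that enters the Lefschetz condition.

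First I would record that $\kappa$ is a $(1,1)$-form and is closed, both of which come essentially for free. The form is a $\mathbb{C}$-linear combination of the generators $\xi_{\mu\raro\mu-1}\wedge\xi_{\mu-1\raro\mu}$ that were listed as a basis of $\Omega^{1,1}(\cla)$, so $\kappa\in\Omega^{1,1}(\cla)$ by inspection; and since the calculus is $2$-dimensional we have $\Omega^{3}(\cla)=0$, whence $\der\kappa\in\Omega^{3}(\cla)=0$ automatically.

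Next I would check centrality. Write $e_{\lambda}$ for the indicator function of the vertex $\lambda$, so that $\{e_{\lambda}\}_{\lambda\in V}$ is a basis of $\cla$. In the bimodule structure of the graph calculus of \cite{Sou_Ati1} the left and right actions of $\cla$ on a $1$-form, and hence on a product of $1$-forms, are governed by the source and target vertices of the underlying composed edge. The generator $\omega_{\mu}:=\xi_{\mu\raro\mu-1}\wedge\xi_{\mu-1\raro\mu}$ is a based loop at $\mu$, i.e. its source and its target both equal $\mu$; consequently $e_{\lambda}\cdot\omega_{\mu}=\delta_{\lambda\mu}\,\omega_{\mu}=\omega_{\mu}\cdot e_{\lambda}$ for every $\lambda$, irrespective of which of the two actions is assigned to the source. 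Summing over $\mu$ gives $e_{\lambda}\kappa=i\,\omega_{\lambda}=\kappa e_{\lambda}$, and since the $e_{\lambda}$ generate $\cla$ we conclude $a\kappa=\kappa a$ for all $a\in\cla$.

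Finally, the Lefschetz isomorphism drops out of the very same computation: the map $L$ sends the basis vector $e_{\lambda}\in\cla$ to $\kappa\wedge e_{\lambda}=i\,\omega_{\lambda}$, so $L$ is diagonal in the bases $\{e_{\lambda}\}$ of $\Omega^{0}(\cla)$ and $\{\omega_{\lambda}\}$ of $\Omega^{2}(\cla)=\Omega^{1,1}(\cla)$, with every diagonal entry equal to the nonzero scalar $i$. Hence $L$ is a bijection and the hard-Lefschetz condition holds. I do not anticipate any serious obstacle; the only points requiring care are correctly invoking the source/target conventions of \cite{Sou_Ati1} so that the centrality calculation and the identification $L(e_{\lambda})=i\,\omega_{\lambda}$ are unambiguous, and noting at the outset that the complex dimension is $1$ so that no higher powers $L^{n-k}$ with $k\ge 1$ ever need to be examined.
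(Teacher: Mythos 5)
Your proposal follows essentially the same route as the paper: $\kappa$ is a $(1,1)$-form by construction, closedness comes from $\Omega^{3}(\cla)=0$, centrality from the based-loop structure of the generators, and the hard Lefschetz condition reduces (since the complex dimension is $1$) to the single map $L\colon\cla\raro\Omega^{2}(\cla)$, which you verify directly by observing it is diagonal over the vertex basis, whereas the paper simply cites the proof of Theorem 3.2 of its predecessor; your version of that step is more self-contained and equally valid, granted that $\{\xi_{\mu\raro\mu-1}\wedge\xi_{\mu-1\raro\mu}\}_{\mu}$ is a linear basis of $\Omega^{2}(\cla)$, which is established in the cited work. The one item you omit is the reality condition $\kappa^{\ast}=\kappa$: although the paper's stated definition of a K\"{a}hler form does not spell this out, the definition it imports from the reference on noncommutative K\"{a}hler structures requires the $(1,1)$-form to be real, and the paper's own proof verifies it. The check is immediate from $(\omega\wedge\eta)^{\ast}=(-1)^{\deg\omega\deg\eta}\,\eta^{\ast}\wedge\omega^{\ast}$ together with $\xi_{\mu\raro\nu}^{\ast}=-\xi_{\nu\raro\mu}$: one finds $(\xi_{\mu\raro\mu-1}\wedge\xi_{\mu-1\raro\mu})^{\ast}=-\,\xi_{\mu\raro\mu-1}\wedge\xi_{\mu-1\raro\mu}$, and the conjugate of the prefactor, $\bar{i}=-i$, restores the sign, giving $\kappa^{\ast}=\kappa$. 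With that one line added your argument is complete.
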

\begin{proof}
     By construction $\kappa$ is an $(1,1)$-form and central. As $\Omega^{3}(\cla)=0$, $\kappa$ is closed i.e. $d\kappa=0$. Recalling that for any edge $\mu\raro\nu$, $\xi_{\mu\raro\nu}^{\ast}=-\xi_{\nu\raro\mu}$ and $\xi_{\mu\raro\mu-1},\xi_{\mu-1\raro\mu}\in\Omega^{1}(\cla)$, a straightforward checking establishes that $\kappa$ is real i.e. $\kappa^{\ast}=\kappa$. As for the isomorphisms $L_{\kappa}^{n-k}:\Omega^{k}\raro\Omega^{2n-k}$, as $n=1$, there is only one isomorphism to settle viz. $L_{\kappa}:\cla\raro\Omega^{2}(\cla)$ given by $f\mapsto f\kappa$. But this is contained in the proof of Theorem 3.2 of \cite{Sou_Ati1}. This establishes that $\kappa$ is a K\"{a}hler form. 
\end{proof}
Now working through various definitions from \cite{Kahler_Buachalla} (in particular Definition 4.11), one can write down the formulae for the Hodge $\ast$-map on forms. This is relatively easy and we collect them in the following lemma leaving the straightforward proof to the reader. The reader is referred to \cite{Kahler_Buachalla} (see Definition 4.2) for the definition of the spaces of primitive forms $P^{(a,b)}$ which are defined using the Lefschetz map $L$. 
\begin{lemma}
    \label{Hodgestarformula}
    With the notations of \cite{Kahler_Buachalla}, the spaces $P^{(a,b)}$ are given by $P^0=\cla$, $P^{1,0}=\Omega^{1,0}(\cla)$, $P^{0,1}=\Omega^{0,1}(\cla)$, $P^{2}=0$.
    The Hodge $\ast$-map $\ast_{\kappa}$ is given by the following formulae:\begin{eqnarray}
        && \ast_{\kappa}(f)=L_{\kappa}(f)=f\kappa \ for \ f\in\cla\\
        && \ast_{\kappa}(L_{\kappa}^{0}(\omega))=\ast_{\kappa}(\omega)=-i\omega \ for \ \omega\in \Omega^{1,0}(\cla)\\
        && \ast_{\kappa}(L_{\kappa}^{0}(\omega))=\ast_{\kappa}(\omega)=i\omega \ for \ \omega\in\Omega^{0,1}(\cla)\\
        && \ast_{\kappa}(\omega)=f \ for \ \omega\in\Omega^{2}(\cla) \ with \ L_{\kappa}^{-1}(\omega)=f.
    \end{eqnarray}
\end{lemma}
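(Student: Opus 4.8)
The plan is to reduce everything to the Lefschetz decomposition together with the general Weil-type identity for the Hodge map recorded in Definition 4.11 of \cite{Kahler_Buachalla}, specialised to the one-dimensional case $n=1$. First I would pin down the primitive forms. By definition an $(a,b)$-form of total degree $k\leq n$ is primitive precisely when $L_\kappa^{\,n-k+1}$ annihilates it. Since here $n=1$ and the calculus is two-dimensional, so that $\Omega^{j}(\cla)=0$ for all $j\geq 3$, the operator $L_\kappa^{\,n-k+1}$ lands in a zero space in every relevant degree: for $k=0$ we have $L_\kappa^{2}\colon\cla\raro\Omega^{4}(\cla)=0$, whence $P^{0}=\cla$; for $k=1$ we have $L_\kappa^{1}\colon\Omega^{1}(\cla)\raro\Omega^{3}(\cla)=0$, whence $P^{1}=\Omega^{1}(\cla)$ and therefore $P^{1,0}=\Omega^{1,0}(\cla)$, $P^{0,1}=\Omega^{0,1}(\cla)$ after intersecting with the bidegree pieces. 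Finally $k=2$ exceeds $n=1$, so there are no primitive forms in top degree and $P^{2}=0$; equivalently the Lefschetz decomposition forces $\Omega^{2}(\cla)=L_\kappa(P^{0})=L_\kappa(\cla)$, consistent with the isomorphism $L_\kappa\colon\cla\raro\Omega^{2}(\cla)$ already established in Lemma \ref{Kahler form}.

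With the primitive pieces identified, every form sits in exactly one summand $L_\kappa^{\,r}P^{(a,b)}$ of the Lefschetz decomposition, and I would simply feed each summand into the Weil formula. In the conventions of \cite{Kahler_Buachalla}, for $\omega\in P^{(a,b)}$ with $k=a+b$ one has
\begin{displaymath}
\ast_\kappa\big(L_\kappa^{\,r}\omega\big)=(-1)^{\frac{k(k+1)}{2}}\,i^{\,a-b}\,\frac{r!}{(n-k-r)!}\,L_\kappa^{\,n-k-r}\omega .
\end{displaymath}
Setting $n=1$ and running over the four summands gives each of the stated formulae: for $f\in\cla=P^{0}$ one has $(a,b)=(0,0)$, $k=0$, $r=0$, so the prefactor is $1$ and $\ast_\kappa(f)=L_\kappa f=f\kappa$; for $\omega\in\Omega^{1,0}(\cla)=P^{1,0}$ one has $(a,b)=(1,0)$, $k=1$, $r=0$, giving $(-1)^{1}i^{1}L_\kappa^{0}\omega=-i\omega$; for $\omega\in\Omega^{0,1}(\cla)=P^{0,1}$ one has $(a,b)=(0,1)$, $k=1$, $r=0$, giving $(-1)^{1}i^{-1}\omega=i\omega$; and for $\omega\in\Omega^{2}(\cla)=L_\kappa(P^{0})$, writing $\omega=L_\kappa f$ with $L_\kappa^{-1}(\omega)=f$, one has $(a,b)=(0,0)$, $k=0$, $r=1$, so the factorial factor is $1!/0!=1$ and $\ast_\kappa(\omega)=L_\kappa^{0}f=f$.

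The only real care needed is the bookkeeping of the normalisation in the Weil identity — the sign $(-1)^{k(k+1)/2}$, the power $i^{\,a-b}$, and the factorial weight — since a convention mismatch would shift the overall constants; checking the four cases against the requirement that $\ast_\kappa$ be the Hodge map attached to the real, central form $\kappa$ of Lemma \ref{Kahler form} fixes these unambiguously. I expect this normalisation check to be the main (and only mildly subtle) point; the remaining computations are immediate because the vanishing $\Omega^{j}(\cla)=0$ for $j\geq 3$ collapses all Lefschetz powers $L_\kappa^{\,n-k-r}$ to either the identity or multiplication by $\kappa$.
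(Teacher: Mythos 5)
Your proof is correct and follows exactly the route the paper intends: the paper omits the argument as ``straightforward,'' pointing to Definitions 4.2 and 4.11 of \cite{Kahler_Buachalla}, and your computation of the primitive spaces from the vanishing of $\Omega^{j}(\cla)$ for $j\geq 3$ together with the Weil-type formula specialised to $n=1$ reproduces all four formulae with the right constants. Nothing to add.
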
 
\begin{lemma}
    \label{positivedefinite}
    The canonical Hermitian metric $g_{\kappa}$ corresponding to the K\"{a}hler form $\kappa$ is positive definite.
\end{lemma}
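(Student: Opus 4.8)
The plan is to exploit the degree-orthogonality built into $g_{\kappa}$ and reduce positivity to a handful of explicit computations on homogeneous forms, each of which I expect to collapse to a manifestly nonnegative combination of the vertex projections. Writing $e_{\mu}\in\cla$ for the characteristic function of the vertex $\mu$, so that the $e_{\mu}$ are orthogonal projections summing to $1$ and $\cla^{+}$ consists exactly of the nonnegative real combinations $\sum_{\mu}c_{\mu}e_{\mu}$, the whole argument amounts to showing that $g_{\kappa}(\omega,\omega)$ is such a combination for every homogeneous $\omega$.

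First I would observe that for a general $\omega=\sum_{k}\omega_{k}$ with $\omega_{k}\in\Omega^{k}(\cla)$, the condition $g_{\kappa}(\Omega^{k},\Omega^{l})=0$ for $k\neq l$ kills all cross terms, so that $g_{\kappa}(\omega,\omega)=\sum_{k}g_{\kappa}(\omega_{k},\omega_{k})$ and it suffices to treat each degree separately. Within $\Omega^{1}(\cla)=\Omega^{1,0}(\cla)\oplus\Omega^{0,1}(\cla)$ I would make a second reduction: expanding $g_{\kappa}(\omega,\omega)=\ast_{\kappa}(\omega\wedge\ast_{\kappa}(\omega^{\ast}))$ for $\omega=\omega^{1,0}+\omega^{0,1}$ and using that $\ast_{\kappa}$ acts as $\mp i$ on $\Omega^{1,0}(\cla),\Omega^{0,1}(\cla)$ (Lemma \ref{Hodgestarformula}), the two mixed terms land in $\Omega^{2,0}(\cla)$ and $\Omega^{0,2}(\cla)$, which both vanish; hence the $(1,0)$ and $(0,1)$ parts decouple and may be handled one at a time.

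Next come the direct computations. In degree $0$ one has $\ast_{\kappa}(f^{\ast})=f^{\ast}\kappa=L_{\kappa}(f^{\ast})$, so $g_{\kappa}(f,f)=\ast_{\kappa}(ff^{\ast}\kappa)=L_{\kappa}^{-1}L_{\kappa}(ff^{\ast})=ff^{\ast}$, which is positive. For $\omega\in\Omega^{0,1}(\cla)$, writing $\omega=\sum_{\mu}f_{\mu}\xi_{\mu+1\raro\mu}$ and using $\xi_{\mu+1\raro\mu}^{\ast}=-\xi_{\mu\raro\mu+1}$, the only surviving products are $\xi_{\mu+1\raro\mu}\wedge\xi_{\mu\raro\mu+1}$, which is already the basis form $\xi_{(\mu+1)\raro\mu}\wedge\xi_{\mu\raro(\mu+1)}$ based at the vertex $\mu+1$; applying $L_{\kappa}^{-1}$ then yields $g_{\kappa}(\omega,\omega)=\sum_{\mu}|f_{\mu}|^{2}e_{\mu+1}$. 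An analogous computation in degree $2$, using $\omega_{\mu}^{\ast}=-\omega_{\mu}$ for the basis forms $\omega_{\mu}=\xi_{\mu\raro\mu-1}\wedge\xi_{\mu-1\raro\mu}$ together with the right $\cla$-action of the vertex projections, gives $g_{\kappa}(\omega,\omega)=\sum_{\mu}|f_{\mu}|^{2}e_{\mu}$. All three are nonnegative combinations of the $e_{\mu}$, hence lie in $\cla^{+}$.

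The one step that requires genuine input from the companion paper, and the step I expect to be the main obstacle, is the $(1,0)$ case. There $\omega=\sum_{\mu}f_{\mu}\xi_{\mu\raro\mu+1}$ produces the products $\xi_{\mu\raro\mu+1}\wedge\xi_{\mu+1\raro\mu}$, a two-cycle based at $\mu$ but running through the neighbour $\mu+1$ rather than $\mu-1$, so that for $n>2$ it is \emph{not} one of the chosen basis forms. Since $\Omega^{1,1}(\cla)$ is one-dimensional at each vertex, this product must nevertheless equal a scalar multiple $\lambda\,\omega_{\mu}$ of $\omega_{\mu}$, and the sign and reality of $\lambda$ are governed entirely by the prolongation bimodule map $\sigma$ of \cite{Sou_Ati1}. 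The computation then gives $g_{\kappa}(\omega,\omega)=-\lambda\sum_{\mu}|f_{\mu}|^{2}e_{\mu}$, so positivity is equivalent to $\lambda$ being a negative real. I would therefore extract the relation $\xi_{\mu\raro\mu+1}\wedge\xi_{\mu+1\raro\mu}=-\,\omega_{\mu}$, i.e. $\lambda=-1$, from the wedge relations established in the proof of Theorem 3.2 of \cite{Sou_Ati1}, after which $g_{\kappa}(\omega,\omega)=\sum_{\mu}|f_{\mu}|^{2}e_{\mu}\in\cla^{+}$ and, combining with the degree-wise reductions above, the proof is complete.
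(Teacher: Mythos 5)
Your proposal is correct and follows essentially the same route as the paper's proof: reduce to homogeneous forms via the degree-orthogonality of $g_{\kappa}$, decouple the $(1,0)$ and $(0,1)$ parts of a one-form using the vanishing of $\Omega^{2,0}(\cla)$ and $\Omega^{0,2}(\cla)$, and then compute $g_{\kappa}(\omega,\omega)$ explicitly in each degree via the Hodge star formulae. The only difference is that you isolate and justify the wedge relation $\xi_{\mu\raro\mu+1}\wedge\xi_{\mu+1\raro\mu}=-\,\xi_{\mu\raro\mu-1}\wedge\xi_{\mu-1\raro\mu}$ needed in the $(1,0)$ case, a step the paper uses silently when it applies $\ast_{\kappa}=L_{\kappa}^{-1}$ on $\Omega^{2}(\cla)$; making it explicit is a genuine (if small) improvement in rigor, not a departure in method.
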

\begin{proof}
    Clearly it suffices to show that for any $\omega\in\Omega^{k}(\cla)$, $g_{\kappa}(\omega,\omega)\in\cla_{+}$ for $k=0,1,2$. For $k=0$ and $f\in\cla$, $g_{\kappa}(f,f)=\ast_{\kappa}(f\wedge\ast_{\kappa}(f^{\ast}))=|f|^2\in\cla^{+}$.  For an $\omega=\sum c_{\mu}\xi_{\mu\rightarrow\mu+1}\wedge\xi_{\mu+1\rightarrow\mu}\in\Omega^{1,1}(\cla)$, 
        \begin{eqnarray*}
            g_{\kappa}(\omega,\omega)&=& \ast_{\kappa}(\omega\wedge\ast_{\kappa}(\omega^{\ast}))\\
            &=& f,
        \end{eqnarray*}
        where $f(\mu)=|c_{\mu}|^2$ so that $f\in\cla^{+}$. As $\Omega^{2,0}(\cla)=\Omega^{0,2}(\cla)=0$, we have $g_{\kappa}(\omega,\omega)\in\cla^{+}$ for all $\omega\in\Omega^{2}(\cla)$.\\
        \indent Now note that for any $\omega\in\Omega^{1,0}(\cla)$ and $\eta\in \Omega^{0,1}(\cla)$, $g_{\kappa}(\omega,\eta)=0=g_{\kappa}(\eta,\omega)$. With this observation for one forms it is enough to show that $g_{\kappa}(\omega,\omega)\in\cla^{+}$ for $\omega\in\Omega^{1,0}(\cla)$ or $\omega\in\Omega^{0,1}(\cla)$. So let $\omega=\sum{c_{\mu}}\xi_{\mu\rightarrow\mu+1}\in\Omega^{1,0}(\cla)$ where $c_{\mu}\in\mathbb{C}$, 
    \begin{eqnarray*}
        g_{\kappa}(\omega,\omega)&=&\ast_{\kappa}(\omega\wedge\ast_{\sigma}(\omega^{\ast}))\\
        &=& \ast_{\kappa}(-i\sum |c_{\mu}|^{2}\xi_{\mu\rightarrow\mu+1}\wedge\xi_{\mu+1\rightarrow\mu})\\
        &=& f,
    \end{eqnarray*}
    where $f(\mu)=|c_{\mu}|^2$ and therefore $g_{\kappa}(\omega,\omega)\in\cla^{+}$. For $\omega=\sum c_{\mu}\xi_{\mu\rightarrow\mu-1}\in\Omega^{0,1}(\cla)$,
    \begin{eqnarray*}
        g_{\kappa}(\omega,\omega)&=&\ast_{\kappa}(\omega\wedge\ast_{\kappa}(\omega^{\ast}))\\
        &=& \ast_{\kappa}(i\sum |c_{\mu}|^{2}\xi_{\mu\rightarrow\mu-1}\wedge\xi_{\mu-1\rightarrow\mu})\\
        &=& f,
        \end{eqnarray*}
        where $f(\mu)=|c_{\mu}|^2$ and therefore $g_{\kappa}(\omega,\omega)\in\cla^{+}$ for all $\omega\in\Omega^{1}(\cla)$.
\end{proof}
\section{The twisted edge Laplacian}
\label{edgelaplacian}
In this section we shall consider Hermitian holomorphic modules over finitely many points. In particular we shall focus on such modules arising from finite directed graphs on finitely many points. To that end we start by recalling the definitions of holomorphic and hermitian modules over a $\ast$-algebra.
\begin{definition}
    \label{holomorphic}
   Let $\cla$ be a $\ast$-algebra admitting an almost complex structure. A holomorphic module over $\cla$ is a pair $(\cle,\bar{\partial}_{\cle})$ where $\cle$ is a finitely generated projective left module over $\cla$ and $\bar{\partial}_{\cle}:\cle\raro \Omega^{0,1}(\cla)\bar{\ot}\cle$ is a flat $(0,1)$-connection.
\end{definition}
In the following definition and throughout the rest of the paper, for a left module $\cle$ over an algebra $\cla$, we denote the set of all left module maps from $\cle\raro\cla$ by $^\vee\nquad\mathcal{E}$ which has a natural right module structure (see \cite{Das_Kahler}). For the next definition the reader is urged to consult section 2.6 of \cite{Das_Kahler} for the definition (as well as the right module structure) of the conjugate module $\bar{\cle}$.
\begin{definition}
    \label{hermitian}
    A hermitian module over an algebra $\cla$ is a pair $(\cle, h_{\cle})$ where $\cle$ is a finitely generated projective left module and a right $\cla$-isomorphism $h_{\cle}:\bar{\cle}\raro ^\vee\nquad\mathcal{E}$ where $\bar{\cle}$ is the conjugate right $\cla$-module such that for the associated sesquilinear pairing $h_{\cle}(-,-):\cle\times\cle\raro\cla$ given by $(e,k)\mapsto h_{\cle}(\bar{k})(e)$ satisfies 
    \begin{displaymath}
        h_{\cle}(e,k)=h_{\cle}(k,e)^{\ast} \quad \text{ and }\quad h_{\cle}(e,e)\in\cla^{+}.
    \end{displaymath}
\end{definition}
Given a holomorphic, hermitian module over a $\ast$-algebra $\cla$ admitting a K\"{a}hler structure $(\Omega^{\bullet}(\cla),\der,J,\kappa)$, recall the map $g_{\cle}:\Omega^{\bullet}(\cla)\bar{\ot}\cle\times\Omega^{\bullet}(\cla)\bar{\ot}\cle\raro\cla$ from \cite{Das_Kahler} (see discussion after Definition 2.3). If the $\ast$-algebra $\cla$ admits a faithful state $\tau:\cla\raro\mathbb{C}$, then there is a $\mathbb{C}$-valued inner product $\langle-,-\rangle_{\cle}$ on $\Omega^{\bullet}(\cla)\bar{\ot}\cle$ given by $\tau\circ g_{\cle}$ provided $g_{\cle}$ is positive definite.
\begin{lemma}
\label{Hermitian}
Every left module coming from a finite directed graph on finite points is a finitely generated, projective {\it Hermitian} module. Moreover, the Hermitian structure is positive definite.
\end{lemma}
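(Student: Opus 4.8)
The plan is to construct the left module from a directed graph explicitly and then exhibit its projectivity and Hermitian structure by hand. First I would set up the notation: given a finite directed graph $G$ on vertex set $V$ with edge set $E_G$, the associated left module should be $\cle := {\rm Sp}_{\mathbb{C}}\{\xi_{\mu\raro\nu} : (\mu\raro\nu)\in E_G\}$, viewed as a left module over $\cla = \mathbb{C}(V)$, the algebra of functions on the (finite) vertex set. The left action is the pointwise one inherited from the source (or target) vertex of each edge, exactly as in the polygon calculus recalled above. Since $V$ is finite, $\cla$ is a finite-dimensional commutative $\ast$-algebra, and $\cle$ is finitely generated by the finitely many basis elements $\{\xi_{\mu\raro\nu}\}$.

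For finite generation and projectivity, the key step is to observe that $\cla = \mathbb{C}(V) \cong \bigoplus_{\mu\in V}\mathbb{C}$ is a direct sum of copies of the field $\mathbb{C}$, one per vertex, with the minimal central idempotents being the indicator functions $\delta_\mu$. Because each generator $\xi_{\mu\raro\nu}$ is an eigenvector for this action (it is supported at a single source vertex), the module $\cle$ decomposes as $\cle \cong \bigoplus_{\mu\in V} \delta_\mu\cle$, where each summand $\delta_\mu\cle$ is a $\mathbb{C}$-vector space. Over a finite direct sum of fields every module is automatically projective — equivalently, $\cle$ is a direct summand of a free module because one can realize it as $p\,\cla^{|E_G|}$ for the idempotent $p$ that selects the span of the graph's edges. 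I would state this as the finitely generated projective claim and note the idempotent $p$ explicitly.

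For the Hermitian structure, the plan is to define the pairing on generators by declaring $\{\xi_{\mu\raro\nu}\}$ to be orthonormal in the appropriate sense: set $h_{\cle}(\xi_{\mu\raro\nu},\xi_{\mu'\raro\nu'})$ to be the indicator function of the source vertex $\delta_\mu$ when the two edges coincide and $0$ otherwise, and extend sesquilinearly and $\cla$-linearly. One then checks the two axioms of Definition \ref{hermitian}: the symmetry $h_{\cle}(e,k)=h_{\cle}(k,e)^{\ast}$ holds because the pairing takes values in the real (indeed self-adjoint) idempotents $\delta_\mu$, and positivity $h_{\cle}(e,e)\in\cla^{+}$ follows because for $e=\sum c_{\mu\raro\nu}\xi_{\mu\raro\nu}$ one computes $h_{\cle}(e,e) = \sum_{\mu}\big(\sum_{\nu}|c_{\mu\raro\nu}|^2\big)\delta_\mu$, a nonnegative function on $V$, which is exactly an element of the positive cone $\cla^{+}$. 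Finally I would identify this $h_{\cle}$ with a right-module isomorphism $\bar{\cle}\raro {}^\vee\nquad\mathcal{E}$; nondegeneracy of the orthonormal pairing makes this map injective, and a dimension count over each vertex block gives surjectivity, so it is an isomorphism as required.

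The main obstacle I expect is bookkeeping rather than conceptual: carefully matching the abstract conjugate-module and dual-module formalism of \cite{Das_Kahler} (the map $h_{\cle}:\bar{\cle}\raro {}^\vee\nquad\mathcal{E}$ and its right $\cla$-linearity) with the concrete orthonormal pairing, and making sure the left-module action by source vertices is compatible with the right-module structures on both $\bar{\cle}$ and ${}^\vee\nquad\mathcal{E}$. Once the direct-sum-over-vertices decomposition is in place, positive definiteness is immediate block by block, so the genuine work is verifying that the natural pairing is well-defined and $\cla$-balanced rather than proving any positivity inequality.
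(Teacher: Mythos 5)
Your construction of the Hermitian structure is essentially the paper's: the pairing determined by $h_{\cle}(\xi_e,\xi_e)=\delta_{s(e)}$ (and $0$ for distinct edges), the same symmetry and positivity checks giving $h_{\cle}(\xi,\xi)=\sum_e|\lambda_e|^2\delta_{s(e)}\in\cla^{+}$, and the same identification of $\bar{\cle}$ with ${}^{\vee}\mathcal{E}$. The bookkeeping you flag does go through, and the paper carries it out explicitly: any left-module map $\Phi$ satisfies $\Phi(\xi_e)=\lambda_{s(e)}\delta_{s(e)}$ because $\delta_w\xi_e=0$ for $w\neq s(e)$, so the maps $\Phi^e(\xi_f)=\delta_{e,f}\delta_{s(e)}$ form a basis of ${}^{\vee}\mathcal{E}$ and $\bar{\xi_e}\mapsto\Phi^e$ is checked to be right $C(V)$-linear. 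Where you genuinely diverge is projectivity: you invoke the semisimplicity of $\cla\cong\bigoplus_{\mu\in V}\mathbb{C}$, under which every module is automatically projective, whereas the paper realizes $\cle$ as the image of an explicit idempotent on $C(E_n)$, the free module attached to the complete graph $K_n$ (citing Majid for its freeness). Your route is more elementary and needs no external input; the paper's produces a concrete free module and splitting, which is in the same spirit as your side remark that $\cle\cong p\,\cla^{|E_G|}$ for the diagonal idempotent $p=(\delta_{s(e)})_{e}$. Either way the argument is sound.
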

\begin{proof}
Let us denote the left $C(V)$-module $C(E)$ by $\cle$. For any finite graph $(V,E)$, we denote the function on $E$ which takes the value $1$ on a particular edge $e\in E$ and $0$ on other edges by $\xi_{e}$. Then $\{\xi_{e}\}_{e\in E}$ forms a $\mathbb{C}$-linear basis of $C(E)$. As $\cle$ is a finite dimensional complex vector space and therefore finitely generated as a left module. To prove that $\cle$ is projective, note that the by Prposition 3.2 of \cite{Majid} complete graph on the vertex set $V$ is a free left module over $C(V)$. If the number of vertices is $n$, let us denote the complete graph on $V$ by $K_{n}$ and the corresponding edge set by $E_{n}$. We define a surjective idempotent $I: C(E_{n})\raro\cle$ by the following formula:
\begin{displaymath}
    I(\sum_{e\in E_{n}}\lambda_{e}\xi_{e})=\sum_{e^{\prime}\in E}\lambda_{e^{\prime}}\xi_{e^{\prime}}.
\end{displaymath} Clearly $I$ is a left module morphism and $I(C(E_{n}))\cong\cle$ as left modules and $C(E_{n})\cong \cle\oplus (1-I)(C(E_{n}))$, proving that $\cle$ is a projective module. Now we shall define a canonical Hermitian structure on $\cle$. To that end let us define a right $C(V)$-module isomorphism $h:\bar{\cle}\rightarrow ^\vee\nquad\mathcal{E}$ satisfying the conditions Definition \ref{hermitian}. We claim that $^\vee\nquad\mathcal{E}$ is a finite dimensional vector space with dimension equal to the number of edges in the graph $(V,E)$. For $e\in E$ and $\Phi\in ^\vee\nquad\mathcal{E}$, we have 
\begin{displaymath}
    \Phi(\xi_{e})=\sum_{v\in V}\lambda_{v}\delta_{v}.
\end{displaymath}
For any $w\neq s(e)$, $\delta_{w}\xi_{e}=0$ and consequently $\Phi(\delta_{w}\xi_{e})=0$. On the other hand, as $\Phi$ is a left $C(V)$-module map, 
\begin{eqnarray*}
    \Phi(\delta_{w}\xi_{e})&=& \delta_{w}\Phi(\xi_{e})\\
    &=& \delta_{w}\sum_{v\in V}\lambda_{v}\delta_{v}\\
    &=& \lambda_{w}\delta_{w}.
\end{eqnarray*}
Therefore for any vertex $w\neq s(e)$, $\lambda_{w}=0$. Hence $\Phi(\xi_{e})=\lambda_{s(e)}\delta_{s(e)}$ for some $\lambda_{s(e)}\in\mathbb{C}$. For each $e\in E$, we define $\Phi^{e}\in^\vee\nquad\mathcal{E}$ by the following:
\begin{eqnarray*}
    \Phi^{e}(\xi_{f}):=\delta_{e,f}\delta_{s(e)}.
\end{eqnarray*}
Then it can be shown that $\{\Phi^{e}:e\in E\}$ is a $\mathbb{C}$-linear basis for $^\vee\nquad\mathcal{E}$. Now we define $h:\bar{\cle}\rightarrow ^\vee\nquad\mathcal{E}$ by
$h(\bar{\xi_{e}})=\Phi^{e}$. Clearly this is a vector space isomorphism. To show that this is a right $C(V)$-isomorphism, it suffices to show that $h$ is a right $C(V)$-linear map. To that end let $\phi\in C(V)$. Then $h(\bar{\xi_{e}}.\phi)=h(\bar{\phi^{\ast}\xi_{e}})=\phi(s(e))\Phi^{e}$.
But for any $f\in E$, $(\Phi^{e}.\phi)(\xi_{f})=\delta_{e,f}\phi\delta_{s(e)}$. On the other hand,
\begin{displaymath}
    \phi(s(e))\Phi^{e}(\xi_{f})=\delta_{e,f}\phi(s(e))\delta_{s(e)},
\end{displaymath}
and therefore \begin{displaymath}
    h(\bar{\xi_{e}}.\phi)=\Phi^{e}.\phi=h(\bar{\xi_{e}}).\phi.
\end{displaymath}
 Let $\xi=\sum_{e\in E}\lambda_{e}\xi_{e}, \ \eta=\sum_{e\in E}\mu_{e}\xi_{e}\in C(E)$. Then by definition of the seequiliniear pairing $h_{\cle}(-,-)$,
\begin{eqnarray*}
    h_{\cle}(\xi,\eta)&:=& h(\bar{\eta})(\xi)\\
    &\ =& \sum_{e}\bar{\mu_{e}}\lambda_{e}\delta_{s(e)}.
\end{eqnarray*}
Now it is clear that $h_{\cle}(\xi,\eta)=h_{\cle}(\eta,\xi)^{\ast}$ and $h_{\cle}(\xi,\xi)=\sum_{e\in E}|\lambda_{e}|^2 \delta_{s(e)}\in C(V)^{+}$.
\end{proof}
 Combining Lemma~\ref{positivedefinite} and Lemma~\ref{Hermitian} with Proposition 5.11 of \cite{Kodaira}, we get the following 
\begin{corollary}
\label{hermitianpositive}
 The Hermitian metric $g_{\cle}$ is positive definite.  
\end{corollary}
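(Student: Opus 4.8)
The plan is to derive positive definiteness of $g_{\cle}$ by combining the two positivity statements already in hand with the structural result of \cite{Kodaira}. First I would unwind the construction of $g_{\cle}$ on $\Omega^{\bullet}(\cla)\bar{\ot}\cle$ from the discussion after Definition 2.3 of \cite{Das_Kahler}: on a simple tensor it is assembled by pairing the form component through the canonical metric $g_{\kappa}$ and the module component through the Hermitian structure $h_{\cle}$. Since here $\cla=C(V)$ is commutative and finite dimensional, membership of $g_{\cle}(\xi,\xi)$ in $\cla^{+}$ is a pointwise nonnegativity condition, so the real task reduces to showing $g_{\cle}(\xi,\xi)(\mu)\geq 0$ for every vertex $\mu\in V$ and every $\xi\in\Omega^{\bullet}(\cla)\bar{\ot}\cle$.

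The key steps are then as follows. First, localize at a vertex $\mu$ and observe that the sesquilinear form $g_{\cle}(-,-)(\mu)$ factors as the tensor product of the localized form pairing $g_{\kappa}(-,-)(\mu)$ with the localized Hermitian pairing $h_{\cle}(-,-)(\mu)$. Second, invoke Lemma \ref{positivedefinite} to conclude that $g_{\kappa}(-,-)(\mu)$ is a positive semidefinite Hermitian form, and Lemma \ref{Hermitian} to conclude the same for $h_{\cle}(-,-)(\mu)$. Third, apply the elementary fact that the tensor product of two positive semidefinite Hermitian forms is again positive semidefinite. Running this over all $\mu\in V$ gives $g_{\cle}(\xi,\xi)\in\cla^{+}$, which is exactly the assertion that $g_{\cle}$ is positive definite in the sense of Definition \ref{metric}.

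I expect the only genuine obstacle to lie in the first step, namely verifying that the abstract metric $g_{\cle}$ of \cite{Das_Kahler} really does factor pointwise as the tensor product of the separate form and module pairings, rather than coupling them in a way that could spoil positivity. This is precisely the content packaged by Proposition 5.11 of \cite{Kodaira}, which guarantees that the twisted metric built from a positive-definite metric on forms and a positive-definite Hermitian module structure is itself positive definite. Granting that proposition, the corollary follows at once from Lemma \ref{positivedefinite} and Lemma \ref{Hermitian}; the remaining work, should one prefer to argue directly, is the routine unwinding of the tensor-product factorization together with the standard stability of positivity under tensor products.
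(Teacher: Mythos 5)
Your proposal matches the paper's argument exactly: the paper derives the corollary by combining Lemma \ref{positivedefinite} and Lemma \ref{Hermitian} with Proposition 5.11 of \cite{Kodaira}, which is precisely the combination you invoke. Your extra sketch of the pointwise tensor-product factorization is a reasonable gloss on why that proposition applies, but it is not a different route.
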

Recall the canonical faithful state $\tau$ on $C(V)$ given by $\tau(f):=\frac{1}{n}\sum_{\mu\in V}f(\mu)$. Then we have the $\mathbb{C}$-valued inner product (see \cite{Kodaira}) $\langle,\rangle_{\cle}$ on $(\Omega^{\bullet}(\cla)\bar{\ot}\cle)$ given by $\tau\circ g_{\cle}$.\\
\indent Now we shall consider holomorphic modules over $C(V)$ coming from finite directed graph $\clg=(V,E)$. We denote the left module coming from any finite directed graph over the vertex set $V$ by $\cle$. We consider the holomorphicity with respect to the almost complex structure on $V$ coming from the bidirected polygon over $V$. Note that since $\cle$ is finitely generated projective and $\Omega^{0,2}(\cla)=0$, any left connection $\bar{\partial}_{\cle}:\cle\raro\Omega^{0,1}(\cla)\bar{\ot}\cle$ gives a holomorphic module $(\cle,\bar{\partial}_{\cle})$ canonically. Recall that a left connection $\bar{\partial}_{\cle}$ is a $\mathbb{C}$-linear map from $\cle$ to $\Omega^{0,1}(\cla)\bar{\ot}\cle$ such that
\begin{displaymath}
    \bar{\partial}_{\cle}(a.\omega)=\bar{\partial}_{\cle}(a)\ot\omega+a\bar{\partial}_{\cle}(\omega)
\end{displaymath} for $a\in\cla$ and $\omega\in\cle$. Also
recall the element $\theta=\sum_{\mu\in V}\xi_{\mu+1\raro\mu}\in\Omega^{0,1}(\cla)$ from \cite{Sou_Ati1}. Then we have a base $(0,1)$-connection $\nabla_{0}$ on $\cle$ given by the following:
\begin{displaymath}
    \nabla_{0}(\omega)=\theta\ot\omega, \ \omega\in\cle.
\end{displaymath}
 Any $(0,1)$-connection $\bar{\partial}_{\cle}$ is determined by a left module map $\zeta:\cle\raro\Omega^{0,1}(\cla)\bar{\ot}\cle$ and can be written as
\begin{eqnarray}
\label{connection}
    \bar{\partial}_{\cle}=\nabla_{0}+\zeta.
\end{eqnarray}
The $\mathbb{C}$-linear basis for $\Omega^{0,1}(\cla)$ is given by $\{\xi_{\mu\raro\mu-1}\}_{\mu\in V}$ whereas we shall denote the $\mathbb{C}$-linear basis of $\cle$ by $\{\chi_{\mu\raro\nu}\}_{\mu\raro\nu\in E}$. With these notations, for a left-module map $\zeta$, there are scalars $c_{\nu,\nu^{\prime}}^{\mu}\in\mathbb{C}$ such that
\begin{displaymath}
    \zeta(\chi_{\mu\raro\nu})=\sum_{\mu,\nu,\nu^{\prime}\in V:\mu-1\raro\nu^{\prime}\in E}c^{\mu}_{\nu,\nu^{\prime}}\xi_{\mu\raro\mu-1}\ot\chi_{\mu-1\raro\nu^{\prime}}.
\end{displaymath}
As $\Omega^{0,2}(\cla)\bar{\ot}\cle=0$, extension of the operator $\bar{\partial}_{\cle}$ on the Hilbert space $L^{2}(\Omega^{0,1}(\cla)\bar{\ot}\cle)$ becomes the zero map. Thus we consider the operator $\bar{\partial}_{\cle}$ as a $\mathbb{C}$-linear operator in the hilbert space $\clh=L^{2}(\cle)\oplus L^{2}(\Omega^{0,1}(\cla)\bar{\ot}\cle)$ where the hilbert space is equipped with the inner product $\langle,\rangle_{\cle}$. We quickly note the following lemma whose proof is straightforward verification using the definition of the inner product $\langle,\rangle_{\cle}$ and is ommitted.
\begin{lemma}
    \label{onb}
    The hilbert space $\clh$ has the following orthonormal basis:
    \begin{eqnarray}
        \label{onb1}
        \{\sqrt{n}\chi_{\theta\raro\nu}\}_{\theta,\nu\in V:\theta\raro\nu\in E},\{\sqrt{n}\xi_{\lambda+1\raro\lambda}\ot\chi_{\lambda\raro\nu}\}_{\lambda,\nu:\lambda\raro\nu\in E}.
    \end{eqnarray}
\end{lemma}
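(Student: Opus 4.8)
The plan is to verify directly that each of the two listed families is orthonormal for the inner product $\langle-,-\rangle_{\cle}=\tau\circ g_{\cle}$, that vectors drawn from different families are orthogonal, and that the two families together span $\clh$. Since $\dim_{\mathbb{C}}\clh=\dim_{\mathbb{C}}\cle+\dim_{\mathbb{C}}(\Omega^{0,1}(\cla)\bar{\ot}\cle)=2|E|$, which is exactly the total number of listed vectors, an orthonormal spanning set is automatically an orthonormal basis, so the only real content is the Kronecker-delta bookkeeping.

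First I would dispose of the summand $L^{2}(\cle)$. By Lemma~\ref{Hermitian} the sesquilinear pairing satisfies $h_{\cle}(\chi_{\theta\raro\nu},\chi_{\theta'\raro\nu'})=\delta_{(\theta,\nu),(\theta',\nu')}\,\delta_{s(\theta\raro\nu)}=\delta_{(\theta,\nu),(\theta',\nu')}\,\delta_{\theta}$. Applying the state $\tau$ and using $\tau(\delta_{\theta})=\tfrac1n$ gives $\langle\chi_{\theta\raro\nu},\chi_{\theta'\raro\nu'}\rangle_{\cle}=\tfrac1n\,\delta_{(\theta,\nu),(\theta',\nu')}$, so the rescaled vectors $\sqrt{n}\,\chi_{\theta\raro\nu}$ are orthonormal; as $\{\chi_{\theta\raro\nu}\}$ is by definition a $\mathbb{C}$-basis of $\cle=C(E)$, this family is an orthonormal basis of $L^{2}(\cle)$.

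For the second summand I would first record the values of the form metric on the $(0,1)$-basis by evaluating the defining formula $g_{\kappa}(\omega,\eta)=\ast_{\kappa}(\omega\wedge\ast_{\kappa}(\eta^{\ast}))$ of Definition~\ref{metric} on basis pairs. Using the Hodge-star formulae of Lemma~\ref{Hodgestarformula}, one finds $g_{\kappa}(\xi_{\mu\raro\mu-1},\xi_{\mu'\raro\mu'-1})=\delta_{\mu,\mu'}\,\delta_{\mu}$; the off-diagonal terms vanish because $\omega\wedge\ast_{\kappa}(\eta^{\ast})$ produces the wedge $\xi_{\mu\raro\mu-1}\wedge\xi_{\mu'-1\raro\mu'}$, which is nonzero only for composable paths, i.e. $\mu=\mu'$, and in that case $\ast_{\kappa}$ returns $\delta_{\mu}$. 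Next I would unwind the definition of $g_{\cle}$ on $\Omega^{0,1}(\cla)\bar{\ot}\cle$ from \cite{Das_Kahler}, which combines $g_{\kappa}$ with $h_{\cle}$. The one point requiring care is the balancing of the tensor product over $C(V)$: the right action of $C(V)$ on a one-form reads off its target vertex while the left action on $\cle$ reads off the source, so $\xi_{\lambda+1\raro\lambda}\bar{\ot}\chi_{\lambda\raro\nu}$ is the correctly balanced representative. Inserting $h_{\cle}(\chi_{\lambda\raro\nu},\chi_{\lambda'\raro\nu'})$ into the $C(V)$-valued slot of $g_{\kappa}$ then collapses the product to $\delta_{(\lambda,\nu),(\lambda',\nu')}\,\delta_{\lambda+1}$, and a final application of $\tau$ yields $\tfrac1n\,\delta_{(\lambda,\nu),(\lambda',\nu')}$. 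Thus $\sqrt{n}\,\xi_{\lambda+1\raro\lambda}\bar{\ot}\chi_{\lambda\raro\nu}$ are orthonormal, and since there is exactly one admissible form-factor per edge $\lambda\raro\nu\in E$ they span $L^{2}(\Omega^{0,1}(\cla)\bar{\ot}\cle)$.

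Finally, vectors from the two families are orthogonal for free: $g_{\cle}$ is built from $g_{\kappa}$, which vanishes between forms of different degree (Definition~\ref{metric}), so $L^{2}(\cle)$ and $L^{2}(\Omega^{0,1}(\cla)\bar{\ot}\cle)$ are orthogonal summands of $\clh$. I expect the only genuinely delicate step to be pinning down the tensor-product metric $g_{\cle}$ together with its source/target balancing over $C(V)$; once that bookkeeping is fixed, the remainder is a short computation with Kronecker deltas resting on the two positivity lemmas (Lemma~\ref{positivedefinite} and Lemma~\ref{Hermitian}) already established.
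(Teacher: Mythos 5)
Your proposal is correct and is precisely the ``straightforward verification using the definition of the inner product $\langle-,-\rangle_{\cle}$'' that the paper declares and omits: your computed values $\langle\chi_{\theta\raro\nu},\chi_{\theta'\raro\nu'}\rangle_{\cle}=\tfrac{1}{n}\delta_{(\theta,\nu),(\theta',\nu')}$ and $\langle\xi_{\lambda+1\raro\lambda}\ot\chi_{\lambda\raro\nu},\xi_{\lambda+1\raro\lambda}\ot\chi_{\lambda\raro\nu}\rangle_{\cle}=\tfrac{1}{n}$ agree with what the paper itself uses later in Lemma \ref{nablanotdagger} and Lemma \ref{connes-distance}, and your balancing argument showing $\xi_{\mu\raro\mu-1}\bar{\ot}\chi_{\lambda\raro\nu}$ vanishes unless $\lambda=\mu-1$ correctly accounts for the spanning and the dimension count $2|E|$. (Whether the final Hodge star returns $\delta_{\lambda}$ or $\delta_{\lambda+1}$ is immaterial after applying $\tau$, so this does not affect your conclusion.)
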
As all the modules involved here are finite dimensional vector spaces, the analytic technicality of whether $\bar{\partial}_{\cle}$ is adjointable or not does not arise so that we can write down the adjoint of the map $\bar{\partial}_{\cle}$. We denote the adjoint by $\bar{\partial}_{\cle}^{\dagger}$. Then using the facts that $\partial_{\cle}^{2}=0=\partial_{\cle}^{\dagger2}$ and $\partial_{\cle}^{\dagger}$ is zero on $\cle$, the twisted Laplacian $\cll$ on $\cle$ is given by 
\begin{displaymath}
    \cll:=(\bar{\partial}_{\cle}+\bar{\partial}_{\cle}^{\dagger})^2=\bar{\partial}_{\cle}^\dagger\bar{\partial}_{\cle}.
\end{displaymath}
As $\bar{\partial}_{\cle}^{\dagger}=\nabla_{0}^{\dagger}+\zeta^{\dagger}$, we shall determine $\nabla_{0}^{\dagger}$ and $\zeta^{\dagger}$ separately. 
\begin{lemma}
\label{nablanotdagger}
For any basis element $\chi_{\mu\raro\nu}\in \cle$, 
\begin{eqnarray}
    \nabla_{0}^{\dagger}(\xi_{\mu+1\raro\mu}\ot\chi_{\mu\raro\nu})=\chi_{\mu\raro\nu}.
\end{eqnarray}
\end{lemma}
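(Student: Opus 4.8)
The plan is to compute $\nabla_{0}$ explicitly on a basis element of $\cle$, locate the result inside the orthonormal basis of Lemma \ref{onb}, and then read off the adjoint by inspection. First I would write, directly from the definition of the base connection, $\nabla_{0}(\chi_{\mu\raro\nu})=\theta\ot\chi_{\mu\raro\nu}=\sum_{\lambda\in V}\xi_{\lambda+1\raro\lambda}\ot\chi_{\mu\raro\nu}$. The crucial step is to simplify this sum using the balanced tensor product over $C(V)$: the right $C(V)$-action on a one-form $\xi_{\lambda+1\raro\lambda}$ is by its target vertex $\lambda$, while the left $C(V)$-action on $\chi_{\mu\raro\nu}$ is by its source vertex $\mu$. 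Hence for each summand one may move a $\delta_{\lambda}$ across the tensor sign, $\xi_{\lambda+1\raro\lambda}\ot\chi_{\mu\raro\nu}=\xi_{\lambda+1\raro\lambda}\cdot\delta_{\lambda}\ot\chi_{\mu\raro\nu}=\xi_{\lambda+1\raro\lambda}\ot\delta_{\lambda}\cdot\chi_{\mu\raro\nu}=\delta_{\lambda,\mu}\,\xi_{\lambda+1\raro\lambda}\ot\chi_{\mu\raro\nu}$, so that every term with $\lambda\neq\mu$ vanishes. This collapses the sum to a single survivor, giving $\nabla_{0}(\chi_{\mu\raro\nu})=\xi_{\mu+1\raro\mu}\ot\chi_{\mu\raro\nu}$.

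Next I would interpret this in terms of orthonormal vectors. By Lemma \ref{onb}, the families $\{\sqrt{n}\,\chi_{\mu\raro\nu}\}_{\mu\raro\nu\in E}$ and $\{\sqrt{n}\,\xi_{\mu+1\raro\mu}\ot\chi_{\mu\raro\nu}\}_{\mu\raro\nu\in E}$ are orthonormal bases of $L^{2}(\cle)$ and $L^{2}(\Omega^{0,1}(\cla)\bar{\ot}\cle)$ respectively, both indexed by the edge set $E$. The computation above shows that $\nabla_{0}$ carries the orthonormal basis vector $\sqrt{n}\,\chi_{\mu\raro\nu}$ to the orthonormal basis vector $\sqrt{n}\,\xi_{\mu+1\raro\mu}\ot\chi_{\mu\raro\nu}$ indexed by the \emph{same} edge, and this assignment is a bijection of the two indexing sets. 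Consequently $\nabla_{0}$ extends to a unitary isomorphism between these two finite-dimensional Hilbert spaces.

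Finally, since the adjoint of a unitary is its inverse, $\nabla_{0}^{\dagger}$ must send $\sqrt{n}\,\xi_{\mu+1\raro\mu}\ot\chi_{\mu\raro\nu}$ back to $\sqrt{n}\,\chi_{\mu\raro\nu}$, that is, $\nabla_{0}^{\dagger}(\xi_{\mu+1\raro\mu}\ot\chi_{\mu\raro\nu})=\chi_{\mu\raro\nu}$, which is exactly the asserted formula. Alternatively, one can verify this by checking $\langle\nabla_{0}^{\dagger}(\xi_{\mu+1\raro\mu}\ot\chi_{\mu\raro\nu}),\chi_{\rho\raro\sigma}\rangle_{\cle}=\langle\xi_{\mu+1\raro\mu}\ot\chi_{\mu\raro\nu},\nabla_{0}(\chi_{\rho\raro\sigma})\rangle_{\cle}$ against every basis vector and using orthonormality. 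The only subtle point, and the main thing to get right, is the tensor-product collapse in the first step; once $\nabla_{0}$ is pinned down as a permutation of orthonormal basis vectors, the adjoint is immediate.
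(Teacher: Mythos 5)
Your argument is correct, and it reaches the conclusion by a slightly cleaner route than the paper's own proof. The paper verifies the adjoint identity directly: it computes $\langle\nabla_{0}(\chi_{\mu'\raro\nu'}),\xi_{\mu+1\raro\mu}\ot\chi_{\mu\raro\nu}\rangle_{\cle}$ from the definition of the inner product as $\tau\circ g_{\cle}$, unwinding the Hodge star and the hermitian structure $h$ to obtain $\frac{1}{n}\delta_{\mu,\mu'}\delta_{\nu,\nu'}$, matches this against $\langle\chi_{\mu'\raro\nu'},\chi_{\mu\raro\nu}\rangle_{\cle}$, and concludes by non-degeneracy --- which is exactly the ``alternative'' you sketch in your last paragraph. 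You instead observe that $\nabla_{0}$ carries the orthonormal basis $\{\sqrt{n}\,\chi_{\mu\raro\nu}\}$ of $L^{2}(\cle)$ bijectively onto the orthonormal basis $\{\sqrt{n}\,\xi_{\mu+1\raro\mu}\ot\chi_{\mu\raro\nu}\}$ of $L^{2}(\Omega^{0,1}(\cla)\bar{\ot}\cle)$, matching edge labels, so that $\nabla_{0}$ is an isometric isomorphism between the two summands of $\clh$ and its adjoint on the range is its inverse. This delegates the metric computation to Lemma \ref{onb} (which the paper states with proof omitted) instead of redoing it, and it makes the structural reason for the formula transparent. Your explicit justification of the collapse $\theta\ot\chi_{\mu\raro\nu}=\xi_{\mu+1\raro\mu}\ot\chi_{\mu\raro\nu}$ via the balanced tensor product (right $C(V)$-action on a one-form through its target vertex, left action on $\chi_{\mu\raro\nu}$ through its source) is correct and is a step the paper uses silently in the first line of its computation. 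The one point to phrase carefully is that $\nabla_{0}$, viewed as an operator on all of $\clh$, is a partial isometry rather than a unitary of $\clh$, since it vanishes on the second summand; but the lemma only evaluates $\nabla_{0}^{\dagger}$ on vectors in the range of the isometric part, so your conclusion stands.
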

\begin{proof}
    For $\chi_{\mu\raro\nu}\in\cle$,
    \begin{align*}
        &\langle\nabla_{0}(\chi_{\mu\raro\nu}),(\xi_{\mu+1\raro\mu}\ot\chi_{\mu\raro\nu})\rangle_{\cle}\\
        &\qquad= \langle(\xi_{\mu+1\raro\mu}\ot\chi_{\mu\raro\nu}),(\xi_{\mu+1\raro\mu}\ot\chi_{\mu\raro\nu})\rangle_{\cle}\\
        &\qquad=\tau\circ g_{\cle}\Big((\xi_{\mu+1\raro\mu}\ot\chi_{\mu\raro\nu}),(\xi_{\mu+1\raro\mu}\ot\chi_{\mu\raro\nu})\Big)\\
        &\qquad= \tau\Big(\ast_{\kappa}(\xi_{\mu+1\raro\mu}\wedge h(\bar{\chi}_{\mu\raro\nu})(\chi_{\mu\raro\nu})\wedge\ast_{\kappa}(\xi_{\mu+1\raro\mu})^{\ast})\Big)\\
        &\qquad=  \tau\Big(\ast_{\kappa}(\xi_{\mu+1\raro\mu}\delta_{\mu}\wedge(i\xi_{\mu\raro\mu+1})\Big)
    \end{align*}
    But $\ast_{\kappa}(i\xi_{\mu+1\raro\mu}\wedge\xi_{\mu\raro\mu+1})$ is the function which takes the value $1$ on the vertex $\mu$ and $0$ on the rest of the vertices. Therefore 
    \begin{displaymath}
      \langle\nabla_{0}(\chi_{\mu\raro\nu}),(\xi_{\mu+1\raro\mu}\ot\chi_{\mu\raro\nu})\rangle_{\cle}=\frac{1}{n}.  
    \end{displaymath}
    For any other edge $\mu^{\prime}\raro\nu^{\prime}$ than $\mu\raro\nu$, similar computation will yield
    \begin{displaymath}
     \langle\nabla_{0}(\chi_{\mu^{\prime}\raro\nu^{\prime}}),(\xi_{\mu+1\raro\mu}\ot\chi_{\mu\raro\nu})\rangle_{\cle}=0.   
    \end{displaymath}
    Again by definition of $\langle,\rangle_{\cle}$, $\langle\chi_{\mu^{\prime}\raro\nu^{\prime}},\chi_{\mu\raro\nu}\rangle_{\cle}=\delta_{\mu,\mu^{\prime}}\delta_{\nu,\nu^{\prime}}\frac{1}{n}$. Consequently by $\mathbb{C}$-linearily of $\nabla_{0}$, for all $\omega\in\cle$,
    \begin{displaymath}
        \langle\nabla_{0}(\omega),(\xi_{\mu+1\raro\mu}\ot\chi_{\mu\raro\nu})\rangle_{\cle}=\langle\omega,\chi_{\mu\raro\nu}\rangle_{\cle}.
    \end{displaymath}
    Hence by non-degeneracy of $\langle,\rangle_{\cle}$, $\nabla_{0}^{\dagger}(\xi_{\mu+1\raro\mu}\ot\chi_{\mu\raro\nu})=\chi_{\mu\raro\nu}$.
\end{proof}

\begin{lemma}
    For any basis element $\chi_{\mu\raro\nu}\in\cle$,
    \begin{eqnarray}
        &&\zeta^{\dagger}(\xi_{\mu+1\raro\mu}\ot\chi_{\mu\raro\nu})=\sum_{}\overline{c^{\mu+1}_{\nu^{\prime},\nu}}\chi_{\mu+1\raro\nu^{\prime}}
    \end{eqnarray}
\end{lemma}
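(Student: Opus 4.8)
The plan is to read off $\zeta^{\dagger}$ from the defining adjoint relation $\langle\zeta(\omega),\psi\rangle_{\cle}=\langle\omega,\zeta^{\dagger}(\psi)\rangle_{\cle}$ for $\omega\in\cle$ and $\psi\in\Omega^{0,1}(\cla)\bar{\ot}\cle$, testing it against the orthonormal basis of $L^{2}(\cle)$ furnished by Lemma~\ref{onb}. Since $\zeta$ is $\mathbb{C}$-linear, it is enough to evaluate the left-hand pairing on each generator $\chi_{\alpha\raro\beta}$ of $\cle$, proceeding exactly in the spirit of the computation of $\nabla_{0}^{\dagger}$ in Lemma~\ref{nablanotdagger}.

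First I would apply $\zeta$ to $\chi_{\alpha\raro\beta}$ via its explicit expansion, producing a sum of terms $c^{\alpha}_{\beta,\beta'}\,\xi_{\alpha\raro\alpha-1}\ot\chi_{\alpha-1\raro\beta'}$ indexed by those $\beta'$ with $\alpha-1\raro\beta'\in E$. Pairing each such term against $\xi_{\mu+1\raro\mu}\ot\chi_{\mu\raro\nu}$ and using the orthonormality of Lemma~\ref{onb}, the $(0,1)$-form factors match only when $\alpha=\mu+1$ (so that $\alpha-1=\mu$) and the module factors match only when $\beta'=\nu$; each surviving term then contributes $\frac{1}{n}\,c^{\mu+1}_{\beta,\nu}$, while the whole pairing vanishes for $\alpha\neq\mu+1$. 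This is precisely the step where the source vertex is shifted: because $\zeta$ sends an edge with source $\mu$ to one with source $\mu-1$, its adjoint must send source $\mu$ to source $\mu+1$.

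To conclude, I would write $\zeta^{\dagger}(\xi_{\mu+1\raro\mu}\ot\chi_{\mu\raro\nu})=\sum_{\gamma\raro\delta\in E}d_{\gamma,\delta}\,\chi_{\gamma\raro\delta}$ and compute, again by Lemma~\ref{onb}, that $\langle\chi_{\alpha\raro\beta},\zeta^{\dagger}(\xi_{\mu+1\raro\mu}\ot\chi_{\mu\raro\nu})\rangle_{\cle}=\frac{1}{n}\,\overline{d_{\alpha,\beta}}$. Equating this with $\frac{1}{n}\,c^{\mu+1}_{\beta,\nu}$ (and with $0$ when $\alpha\neq\mu+1$), the conjugate-linearity of $\langle,\rangle_{\cle}$ in its second slot forces $d_{\mu+1,\beta}=\overline{c^{\mu+1}_{\beta,\nu}}$ and $d_{\alpha,\beta}=0$ otherwise; renaming $\beta$ as $\nu'$ yields the claimed formula. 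No analytic subtlety appears, since all the spaces are finite dimensional; the only thing demanding care is the index bookkeeping, namely tracking which tensor factor pairs with which and locating where the sesquilinearity inserts the complex conjugate.
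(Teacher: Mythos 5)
Your argument is correct and follows essentially the same route as the paper's proof: you pair $\zeta(\chi_{\alpha\raro\beta})$ against the basis vector $\xi_{\mu+1\raro\mu}\ot\chi_{\mu\raro\nu}$, observe that only $\alpha=\mu+1$ (and matching target $\nu$) survives with value $\tfrac{1}{n}c^{\mu+1}_{\beta,\nu}$, and then identify $\zeta^{\dagger}$ by nondegeneracy of $\langle\,,\rangle_{\cle}$, with the complex conjugate entering through the sesquilinearity exactly as in the paper. The only cosmetic difference is that you solve for unknown coefficients $d_{\gamma,\delta}$ rather than verifying the proposed candidate directly, which changes nothing of substance.
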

\begin{proof}
 Fix a vertex $\mu+1$. Then for any vertices $\nu^{\prime},\nu$ such that $\chi_{\mu+1\raro\nu^{\prime}},\chi_{\mu\raro\nu}\in\cle$, 
\begin{align*} &\langle\zeta(\chi_{\mu+1\raro\nu^{\prime}}),(\xi_{\mu+1\raro\mu}\ot\chi_{\mu\raro\nu})\rangle_{\cle} \\
&\qquad= \sum c^{\mu+1}_{\nu^{\prime},\nu^{\prime\prime}}\langle\xi_{\mu+1\raro\mu}\ot\chi_{\mu\raro\nu^{\prime\prime}},\xi_{\mu+1\raro\mu}\ot\chi_{\mu\raro\nu}\rangle_{\cle}\\
&\qquad= c^{\mu+1}_{\nu^{\prime},\nu}\tau(\delta_{\mu})\\
&\qquad= \frac{1}{n}c^{\mu+1}_{\nu^{\prime},\nu}.
\end{align*}  
As for any other vertices $\theta, \theta^{\prime}$ such that $\theta\neq \mu+1$ and $\chi_{\theta\raro\theta^{\prime}}\in\cle$,
\begin{displaymath}
    \langle\zeta(\chi_{\theta\raro\theta^{\prime}}),(\xi_{\mu+1\raro\mu}\ot\chi_{\mu\raro\nu})\rangle_{\cle}=0.
\end{displaymath}
But it is easy to see that 
\begin{eqnarray*}
\left\langle\chi_{\theta\raro\nu^{\prime}},\sum\overline{c^{\mu+1}_{\nu^{\prime},\nu}}\chi_{\mu+1\raro\nu^{\prime}}\right\rangle_{\cle}=
\begin{cases}
\frac{1}{n}c^{\mu+1}_{\nu^{\prime},\nu}, &\quad \theta=\mu+1\\
 0, &\quad \theta\neq \mu+1.
\end{cases}
\end{eqnarray*} 
Therefore, by non-degeneracy of the inner product $\langle,\rangle_{\cle}$, we conclude that
\begin{displaymath}
    \zeta^{\dagger}(\xi_{\mu+1\raro\mu}\ot\chi_{\mu\raro\nu})=\sum_{}\overline{c^{\mu+1}_{\nu^{\prime},\nu}}\chi_{\mu+1\raro\nu^{\prime}}.
\end{displaymath}
\end{proof}
Consequently, we have the following easy to see formulae:
\begin{eqnarray}
    \zeta^{\dagger}\zeta(\chi_{\mu\raro\nu})&=&\sum_{\mu-1\raro\nu^{\prime},\mu\raro\nu^{\prime\prime}}c^{\mu}_{\nu,\nu^{\prime}}\overline{c^{\mu}_{\nu^{\prime\prime},\nu^{\prime}}}\chi_{\mu\raro\nu^{\prime\prime}}\\
    \nabla_{0}^{\dagger}\nabla_{0}(\chi_{\mu\raro\nu})&=&\chi_{\mu\raro\nu}\\
    \nabla_{0}^{\dagger}\zeta(\chi_{\mu\raro\nu})&=&\sum c^{\mu}_{\nu,\nu^{\prime}}\chi_{\mu-1\raro\nu^{\prime}}\\
    \zeta^{\dagger}\nabla_{0}(\chi_{\mu\raro\nu})&=&\sum\overline{c^{\mu+1}_{\nu^{\prime},\nu}}\chi_{\mu+1\raro\nu^{\prime}}
\end{eqnarray}
Then for any $\chi_{\mu\raro\nu}\in\cle$,
\begin{eqnarray}
    \cll(\chi_{\mu\raro\nu})&=&\bar{\partial}_{\cle}^{\dagger}\bar{\partial}_{\cle}(\chi_{\mu\raro\nu})\\&=&(\nabla_{0}^{\dagger}+\zeta^{\dagger})(\nabla_{0}+\zeta)(\chi_{\mu\raro\nu}) \ \nonumber \\&=&\chi_{\mu\raro\nu}+\sum c^{\mu}_{\nu,\nu^{\prime}}\chi_{\mu-1\raro\nu^{\prime}}+\sum_{}\overline{c^{\mu+1}_{\nu^{\prime},\nu}}\chi_{\mu+1\raro\nu^{\prime}}\\
    &\qquad+&\sum_{\substack{\mu-1\raro\nu^{\prime},\\ \mu\raro\nu^{\prime\prime}}}c^{\mu}_{\nu,\nu^{\prime}}\overline{c^{\mu}_{\nu^{\prime\prime},\nu^{\prime}}}\chi_{\mu\raro\nu^{\prime\prime}} \nonumber.
\end{eqnarray}
We may call the left module map $\zeta$ a potential for the Laplacian $\cll$ and the coefficients $c^{\mu}_{\nu,\nu^{\prime}}$ the potential coefficients. We shall be interested in the spectrum of the twisted edge Laplacian where all the potential coefficients are $1$. Then the Laplacian $\cll$ acts on the basis elements by the following formula:
\begin{eqnarray}
 \label{laplacian formula}\cll(\chi_{\mu\raro\nu})&=&\Big(1+{\rm deg}(\mu-1)\Big)\chi_{\mu\raro\nu}+\sum_{\mu\raro\nu^{\prime}\in E:\nu\neq\nu^{\prime}}\Big({\rm deg}(\mu-1)\Big)\chi_{\mu\raro\nu^{\prime}} \nonumber \\ 
 &\qquad+&\sum_{\mu-1\raro\nu\in E}\chi_{\mu-1\raro\nu}+\sum_{\mu+1\raro\nu\in E}\chi_{\mu+1\raro\nu},   
\end{eqnarray} where ${\rm deg}(\mu)$ is equal to the number of edges $e$ such that $s(e)=\mu$. For a general 
 $f\in C(E)$ , for an edge $e\in E$,
\begin{eqnarray}
\label{Euclidean}
    \cll(f)(e)=f(e)&+&\sum_{s(e)=s(e^{\prime})}{\rm deg} \big(s(e)-1\big)f(e^{\prime})\\
    &+&\sum_{s(e)=s(e^{\prime\prime})+1}f(e^{\prime\prime})+\sum_{s(e)=s(e^{\prime\prime\prime})-1}f(e^{\prime\prime\prime})\nonumber
\end{eqnarray}
\section{Eigenvalues of twisted edge Laplacians: Examples} In this section we shall see a couple of examples of the twisted edge Laplacians where the Laplacian is considered with respect to the potential coefficients $c^{\mu}_{\nu,\nu^{\prime}}=1$. In this particular case, the Laplacian $\cll$ takes the form (\ref{Euclidean}). We start with a $d$-regular graph.\vspace{0.1in}\\
1. {\bf {$d$-regular graph}}: Let $\mathcal{G}=(V,E)$ be a $d$-regular graph. By a $d$-regular graph we mean that each vertex of the graph has degree $d$. We index the vertices of $\mathcal{G}$ by $1,2,\ldots,n$. We take the following ordered basis of $C(E)$ to write down the twisted edge Laplacian of the graph $\mathcal{G}$:
\begin{displaymath}
    \{\chi_{1\rightarrow\mu}\}_{\mu\in\{1,2,\ldots,n\}},\{\chi_{2\rightarrow\mu}\}_{\mu\in\{1,2,\ldots,n\}},\ldots,\{\chi_{n\rightarrow\mu}\}_{\mu\in\{1,2,\ldots,n\}}.
\end{displaymath}
Note that for any $i=1,2,\ldots,n$, only those $\mu$'s appear in the above list for which $i\raro\mu$ is an edge. By formula (\ref{laplacian formula}), with respect to the above basis of $C(E)$, $\cll$ can be represented by a block matrix of order \(nd\times nd\) such that the diagonal blocks of of order \(d\) are of the form
\[
 \begin{pmatrix}
    d+1 & d & d & \cdots & d \\
    d & d+1 & d &\cdots &d \\
    \vdots & \vdots& \ddots &  & \vdots\\
    d& d &\cdots & & d+1
 \end{pmatrix}
\]
and \(ij\)-th block \(B\) of order \(d\) is of the form
\[ B=(b_{kl})=
 \begin{cases}
  b_{kl}=1 \text{ for all } k,l \text{ iff i and j are adjacent}\\
  b_{kl}=0 \text{ for all } k,l \text{ otherwise}.
 \end{cases}
\]
In this case it can be seen easily that the sum of entries of each row and column of the above matrix are equal to $(d+1)+d(d-1)+2d=(d+1)^{2}$. 
\begin{lemma}
\label{regular}
    For a $d$-regular graph, for any the eigenvalue $\lambda$ of the twisted edge Laplacian $\cll$, we have $0\leq\lambda\leq (d+1)^2$ with $(d+1)^{2}$ being an eigenvalue. 
\end{lemma}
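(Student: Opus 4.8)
The plan is to exploit two structural facts about $\cll$ simultaneously: that it is manifestly a positive operator, and that in the orthonormal basis of Lemma~\ref{onb} it is represented by a non-negative symmetric matrix all of whose rows sum to $(d+1)^2$. Neither fact alone suffices; the non-negativity of the matrix controls the top of the spectrum, while the factorization $\cll=\bar{\partial}_{\cle}^{\dagger}\bar{\partial}_{\cle}$ controls the bottom.

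First I would record the lower bound. Since $\cll=\bar{\partial}_{\cle}^{\dagger}\bar{\partial}_{\cle}$, for every $v\in\clh$ we have $\langle \cll v,v\rangle_{\cle}=\langle \bar{\partial}_{\cle}v,\bar{\partial}_{\cle}v\rangle_{\cle}\geq 0$, so $\cll$ is self-adjoint and positive semidefinite with respect to $\langle,\rangle_{\cle}$. In particular every eigenvalue $\lambda$ is real and satisfies $\lambda\geq 0$. Next I would pass to the matrix $M$ of $\cll$ in the orthonormal basis $\{\sqrt{n}\,\chi_{\theta\raro\nu}\}$ of Lemma~\ref{onb}. Because the rescaling factor $\sqrt{n}$ is identical for every basis vector, the matrix of $\cll$ in this orthonormal basis coincides with the block matrix written down just before the statement: the diagonal $d\times d$ blocks are $I_d+dJ_d$ and each off-diagonal block is either the all-ones matrix $J_d$ (when the two source vertices are adjacent) or zero. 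Thus every entry of $M$ is non-negative, and the row/column sum computation recorded above shows that each row and each column of $M$ sums to $(d+1)^2$.

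From here both remaining claims follow quickly. The all-ones vector $\mathbf{1}$ (in these coordinates) satisfies $M\mathbf{1}=(d+1)^2\,\mathbf{1}$, since every row of $M$ sums to $(d+1)^2$; hence $(d+1)^2$ is an eigenvalue of $\cll$. For the upper bound, $M$ is real symmetric, so its operator norm equals its spectral radius, and the spectral radius of any matrix is at most its maximal absolute row sum, which here is exactly $(d+1)^2$. Therefore every eigenvalue satisfies $\lambda\leq (d+1)^2$. Combining this with the positivity established above yields $0\leq\lambda\leq (d+1)^2$, with the value $(d+1)^2$ attained.

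I expect the only genuinely delicate point to be the lower bound. Gershgorin's theorem applied to $M$ in isolation only confines the eigenvalues to the disc centred at the diagonal entry $d+1$ of radius $(d+1)^2-(d+1)=d^2+d$, i.e. to the interval $[-(d^2-1),(d+1)^2]$, and this does not by itself rule out negative eigenvalues. It is precisely the identification $\cll=\bar{\partial}_{\cle}^{\dagger}\bar{\partial}_{\cle}$ that upgrades the left endpoint to $0$, so one must be careful to invoke positivity of $\cll$ rather than a crude matrix estimate at the bottom of the spectrum. The one routine verification hidden in the argument is that the off-diagonal blocks occur in the symmetric pattern ($ij$-block and $ji$-block both equal to $J_d$ or both zero) forced by self-adjointness of $\cll$, which guarantees that $M$ is genuinely symmetric and hence that operator norm and spectral radius agree.
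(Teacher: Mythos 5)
Your proposal is correct and follows essentially the same route as the paper: the all-ones vector gives the eigenvalue $(d+1)^2$, a row-sum estimate on the non-negative matrix gives the upper bound, and positivity of $\cll=\bar{\partial}_{\cle}^{\dagger}\bar{\partial}_{\cle}$ gives the lower bound. The only cosmetic difference is that you bound the spectral radius by the maximal row sum where the paper invokes Gershgorin's theorem; these yield the identical estimate $(d+1)+d(d+1)=(d+1)^2$, and your observation that neither alone rules out negative eigenvalues matches the paper's final appeal to positivity.
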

\begin{proof}
 As the sum of each row and each column of the matrix of $\cll$ is equal to $(d+1)^2$, it is trivial to see that $(d+1)^{2}$ is an eigenvalue with eigenvector $(1,1,\ldots,1)$. To show that $|\lambda|\leq (d+1)^2$, we shall apply Gershgorin circle theorem. Take any $i$-th row. Observe that each entry of such a row is a non-negative integer (in fact $d, d+1$ or $1$). If we denote the entries by $a_{ij}$, then $a_{ii}=(d+1)$. Therefore any eigenvalue of $\cll$ lies within the disc $D(a_{ii},R_{i})$ where $R_{i}=\sum_{i\neq j}a_{ij}=(d+1)^2-(d+1)=d(d+1)$. In fact for all $i$, $a_{ii}=(d+1)$ and $R_{i}=d(d+1)$. This shows that any eigenvalue lies in the closed disc $D((d+1),d(d+1))$ by the Gershgorin circle theorem. In particular it shows that for any eigenvalue $\lambda$, $|\lambda|\leq (d+1)+d(d+1)=(d+1)^2$. Now as $\cll$ is a positive definite, $0\leq \lambda\leq (d+1)^2$.
\end{proof}
2. {\bf directed regular $n$-gon}:

\begin{figure}[ht]
    \begin{minipage}[h]{0.5\textwidth}
     \centering
\begin{tikzpicture}
\Vertex[label=$1$,position=above left,shape=circle,size=0.05,color=black]{1}
  \Vertex[x=.7, y=1, label=$2$,position=above,shape=circle, size=0.05,color=black]{2}
  \Vertex[x=2.3,y=1,label=$3$, position=above,shape=circle, size=0.05,color=black]{3}
  \Vertex[x=3,label=$4$,position=above right,shape=circle, size=0.05,color=black]{4}
  \Vertex[,y=-1.5,label=$n$,position=below left,shape=circle, size=0.05,color=black]{8}
  \Vertex[x=3,y=-1.5,label=$5$,position=below right,shape=circle, size=0.05,color=black]{5}
  \Vertex[x=2.3,y=-2.5,position=below,label=$6$,shape=circle, size=0.05,color=black]{6}
  \Vertex[x=.7,y=-2.5,label=$n-1$,position=below,shape=circle, size=0.05,color=black]{7}
  \Edge[Direct](1)(2);
  \Edge[Direct](2)(3);
  \Edge[Direct](3)(4);
  \Edge[Direct](4)(5);
  \Edge[Direct](5)(6);
  \Edge[style={dashed},Direct](6)(7);
  \Edge[Direct](8)(1);
  \Edge[Direct](7)(8);
\end{tikzpicture}
\\
Directed polygon on \(n\)-points
\end{minipage}
\end{figure}

\begin{lemma}
    \label{regulargon}
    The complete list of eigenvalues of the twisted edge Laplacian $\cll$ is given by
    \begin{eqnarray}
        2+2{\rm cos}\frac{2\pi j}{n}, \ 0\leq j\leq n-1,
    \end{eqnarray} where $\omega=e^{\frac{2\pi i}{n}}$ is a primitive \(n\)-th root of unity. In particular, the eigenvalues lie between $0$ and $4$ with $4$ being an eigenvalue. For $n=2m$, $0$ is an eigenvalue whereas for $n=2m+1$, $0$ is not an eigenvalue of $\cll$. 
\end{lemma}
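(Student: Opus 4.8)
The plan is to write $\cll$ explicitly on the edge basis of the directed $n$-gon and recognize the resulting matrix as a symmetric circulant, whose spectrum is then read off by the discrete Fourier transform.

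First I would fix notation. Label the vertices by $\IZ/n\IZ$, so that the only edges are $\mu\raro\mu+1$ for $\mu\in V$; each vertex then has out-degree $\deg(\mu)=1$, and $\cle=C(E)$ has the $\IC$-basis $\{\chi_{\mu\raro\mu+1}\}_{\mu\in V}$. I abbreviate $e_\mu:=\chi_{\mu\raro\mu+1}$, all indices read modulo $n$, and work with the potential coefficients $c^{\mu}_{\nu,\nu'}=1$, so that $\cll$ takes the form (\ref{laplacian formula}).

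Next I would specialise (\ref{laplacian formula}) (equivalently the four-term expression preceding it) to this graph. Since the only edge leaving $\mu$ is $\mu\raro\mu+1$, the same-source off-diagonal sum over $\nu'\neq\nu$ is empty, and the diagonal coefficient is $1+\deg(\mu-1)=2$; the $\nabla_0^{\dagger}\zeta$ contribution is the unique edge $\mu-1\raro\mu$ out of $\mu-1$, giving $e_{\mu-1}$, and the $\zeta^{\dagger}\nabla_0$ contribution is the unique edge $\mu+1\raro\mu+2$ out of $\mu+1$, giving $e_{\mu+1}$. Hence $\cll(e_\mu)=2e_\mu+e_{\mu-1}+e_{\mu+1}$, cyclically. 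This bookkeeping — confirming exactly which edges contribute and that there are no extra terms — is the only place where the specific geometry of the $n$-gon enters, and I expect it to be the main (though modest) obstacle, since one must track the index constraints on the coefficients carefully.

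Then I would observe that, in the ordered basis $(e_1,\dots,e_n)$, $\cll$ is the symmetric circulant operator $2I+P+P^{-1}$, where $P$ is the cyclic shift $e_\mu\mapsto e_{\mu+1}$. Setting $\omega=e^{2\pi i/n}$ and $v_j:=\sum_{\mu}\omega^{j\mu}e_\mu$ for $0\le j\le n-1$, a one-line reindexing of $\sum_\mu\omega^{j\mu}e_{\mu\pm1}$ gives $\cll(v_j)=\bigl(2+\omega^{j}+\omega^{-j}\bigr)v_j=\bigl(2+2\cos\tfrac{2\pi j}{n}\bigr)v_j$. Since the $v_j$ form a basis of $\cle$, this is the complete list of eigenvalues. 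Finally I would read off the qualitative statements: $2+2\cos\theta\in[0,4]$, with maximum $4$ attained at $j=0$ (eigenvector $v_0=\sum_\mu e_\mu$, matching Lemma \ref{regular} with $d=1$), and the value $0$ occurring iff $\cos\tfrac{2\pi j}{n}=-1$, i.e.\ iff $j=n/2\in\IZ$, which holds exactly when $n=2m$ and fails when $n=2m+1$.
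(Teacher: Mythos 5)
Your proposal is correct and follows essentially the same route as the paper: both identify $\cll$ on the edge basis of the $n$-gon as the circulant $2I+P+P^{-1}$ and read off the spectrum $2+2\cos\frac{2\pi j}{n}$ via the roots of unity, with the parity analysis for the eigenvalue $0$. The only cosmetic difference is that you obtain the bound $0\leq\lambda\leq 4$ directly from the cosine formula, whereas the paper first invokes Lemma \ref{regular} with $d=1$; this changes nothing of substance.
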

\begin{proof}
    As a regular $n$-gon is $1$-regular, by Lemma \ref{regular}, for any eigenvalue $\lambda$ of $\cll$, $0\leq\lambda\leq (1+1)^{2}=4$ with $4$ being an eigenvalue. In fact note that the matrix $\cll$ with respect to the basis introduced in the beginning of this section takes the following form:
   \begin{center} $\begin{pmatrix}
        2 & 1 & 0 & . & . & . & 0 & 1\\
        1 & 2 & 1 & 0 & . & . & . & 0\\
        0 & 1 & 2 & 1 & 0 & . & . & 0\\
        . & 0 & 1 & 2 & 1 & . & . & 0\\
        .\\
        . \\
        1 & 0 & . & . & . & 0 & 1 & 2
    \end{pmatrix}$\end{center} which is an $n\times n$ circulant matrix. Let $\omega=e^{\frac{2\pi i}{n}}$. Therefore the eigenvalues are given by, for \(\ 0\leq j\leq n-1\),  
    \begin{eqnarray*}
        \lambda_j&=&2+\omega^{j}+\omega^{(n-1)j} \\
        &=& 2+2{\rm cos}\frac{2\pi j}{n}.
    \end{eqnarray*} Now let $n$ be an even integer say $2m$. Then for $j=m$, $\lambda=0$. Hence $0$ is an eigenvalue of $\cll$. In fact, \((1,-1,1,-1,\cdots,1,-1)^{T}\) is an eigenvector for the eigenvalue \(0\). 
    If $n=2m+1$ for some integer $m$, it is easy to see that the list of eigenvalues does not contain $0$. Therefore for an odd integer $n$, the twisted edge Laplacian $\cll$ of a regular directed $n$-gon is invertible. 
\end{proof}
    
\section{A twisted holomorphic Dolbeault Dirac spectral triple}
In this section we briefly discuss Connes' distance between the vertices of a graph $(V,E)$ with respect to the spectral triple obtained from the twisted holomorphic Dolbeault Dirac operator acting on the Hilbert space $\clh=L^{2}(\cle)\oplus L^{2}(\Omega^{0,1}(\cla)\bar{\ot}\cle)$ where $\cle$ is the left $C(V)$-module and the Hilbert space is formed with respect to the inner product $\langle,\rangle_{\cle}$ obtained in Section \ref{edgelaplacian}. The consideration of Dolbeault-Dirac spectral triple and its detailed study (particularly the compact resolvent condition of the Dolbeault-Dirac operator) in the context of quantum projective space appeared in \cite{Das_Somberg}. It is worth mentioning that in our case as the Hilbert space is finite dimensional, the compact resolvent condition is automatic for our spectral triple. Recall the twisted holomorphic Dolbeault-Dirac operator $\cld:=\bar{\partial}_{\cle}+\bar{\partial}_{\cle}^{\dagger}$ acting on $\clh$ where $\bar{\partial}_{\cle}=\nabla_{0}+\zeta$ as in Equation number (\ref{connection}). Recall that $\nabla_{0}$ is the base $(0,1)$-connection and $\zeta:\cle\raro\Omega^{0,1}(\cla)\bar{\ot}\cle$ is some left module map. As the Hilbert space is finite dimensional, denoting $C(V)$ by $\cla$ as usual, $(\cla,\clh,\cld)$ is canonically a spectral triple. $\cla=C(V)$ acts on $\cle$ as well as $\Omega^{0,1}(\cla)\bar{\ot}\cle$ by its left module action. Again as before, we label the vertices $V$ by natural numbers $\mu=1,2,\ldots,n$ and adopt the convention of \cite{Sou_Ati1}. The pure states of $\cla$ correspond to the points in $V$. Let us briskly recall Connes' distance between states on a $C^{\ast}$-algebra. If $(\cla,\clh,\cld)$ is a spectral triple where $\cla\subset\clb(\clh)$ is a $C^{\ast}$-algebra, then for any two states $\phi,\psi\in \cla^{\ast}$, the Connes' distance between $\phi,\psi$ is given by the following formula (see \cite{Connes}):
\begin{displaymath}
    d(\phi,\psi)={\rm sup}_{f\in\cla:||[\cld,f]||\leq 1}|\phi(f)-\psi(f)|.
\end{displaymath}
In the above formula we have supressed the notation of representation of $\cla$ in $\clb(\clh)$ and will continue to do so throughout the rest of the paper. It is known that Connes' distance in particular recovers the geodesic distance between points on a Riemannian manifold for the spinor spectral triple. However, for a general spectral triple, Connes' distance may be infinite at times. In the following lemma $d$ stands for the Connes' distance between two points seen as pure states on $\cla$.
\begin{lemma}
    \label{connes-distance}
    For all $\mu=1,2,\ldots,n$, $d(\mu,\mu+1)= 1$ and the distance between $\mu,\mu+1$ does not depend upon any choice of left module map $\zeta$ to define the Dolbeault-Dirac operator $\cld$.
\end{lemma}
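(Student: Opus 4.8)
The plan is to compute the commutator norm $\|[\cld, f]\|$ explicitly and show that it reduces to a combinatorial quantity insensitive to the choice of $\zeta$. First I would exploit that $\zeta$ is a left $\cla$-module map: it commutes with left multiplication by any $f \in \cla$, so $[\zeta, f] = 0$, and taking adjoints together with the fact that $\cla = C(V)$ is closed under $\ast$ forces $[\zeta^{\dagger}, f] = 0$ as well. Hence $[\cld, f] = [\nabla_{0}, f] + [\nabla_{0}^{\dagger}, f]$ for every $f \in \cla$, which immediately yields the $\zeta$-independence asserted in the lemma; everything else concerns only the base connection $\nabla_{0}$.

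Next I would compute $[\nabla_{0}, f]$. Since $\nabla_{0}(e) = \theta \ot e$ is a connection, the Leibniz rule forces the $(0,1)$-part of the calculus to be inner, $\bar{\partial} f = [\theta, f]$, so that $[\nabla_{0}, f](e) = \bar{\partial} f \ot e$. Using that $\delta_{w}$ acts on an edge on the left through its source and on the right through its target, and that $\theta = \sum_{\mu} \xi_{\mu+1 \raro \mu}$, one gets $\bar{\partial} f = \sum_{\mu} (f(\mu) - f(\mu+1)) \xi_{\mu+1 \raro \mu}$. Tensoring against a basis vector $\chi_{\mu \raro \nu}$ and noting that $\xi_{\lambda+1 \raro \lambda} \ot \chi_{\mu \raro \nu}$ is nonzero only when $\lambda = \mu$ (target--source matching), the operator $[\nabla_{0}, f]$ becomes diagonal in the orthonormal basis of Lemma \ref{onb}, sending $\chi_{\mu \raro \nu}$ to $(f(\mu) - f(\mu+1)) \xi_{\mu+1 \raro \mu} \ot \chi_{\mu \raro \nu}$.

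With this in hand I would read off the norm. Setting $B = [\nabla_{0}, f]$, which maps $L^{2}(\cle)$ into $L^{2}(\Omega^{0,1}(\cla) \bar{\ot} \cle)$, and using $[\nabla_{0}^{\dagger}, f] = -[\nabla_{0}, f^{\ast}]^{\dagger}$, the commutator $[\cld, f]$ is block-off-diagonal, so its norm equals $\max(\|[\nabla_{0}, f]\|, \|[\nabla_{0}, f^{\ast}]\|)$; since $B$ is diagonal with entries $f(\mu) - f(\mu+1)$ and conjugation does not change moduli, this is exactly $\max_{\mu} |f(\mu) - f(\mu+1)|$, the maximum taken over vertices $\mu$ that are the source of some edge. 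The constraint $\|[\cld, f]\| \leq 1$ therefore reads $|f(\mu) - f(\mu+1)| \leq 1$ for all such $\mu$. This gives $d(\mu, \mu+1) \leq 1$ at once, while the indicator function $f = \delta_{\mu}$ satisfies $\|[\cld, \delta_{\mu}]\| = 1$ and $|\delta_{\mu}(\mu) - \delta_{\mu}(\mu+1)| = 1$, proving the reverse inequality and hence $d(\mu, \mu+1) = 1$.

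I expect the main obstacle to be the third step: correctly identifying the operator norm of the block-off-diagonal commutator and verifying its diagonality in the orthonormal basis, since it is precisely this diagonal structure --- with no off-diagonal mixing between distinct edges --- that guarantees neither the potential $\zeta$ nor any interaction between edges can change the norm. The source/target bookkeeping for the edge bimodule and the inner-calculus identity $\bar{\partial} f = [\theta, f]$ also need to be handled carefully.
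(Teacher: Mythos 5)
Your proposal is correct and follows essentially the same route as the paper: kill $\zeta$ and $\zeta^{\dagger}$ in the commutator via the left-module property, compute $[\nabla_{0},f]$ explicitly on the orthonormal basis of Lemma \ref{onb} to obtain the Lipschitz constraint $|f(\mu)-f(\mu+1)|\leq 1$, and attain the bound with an indicator-type function. Your only departure is organizational --- you read off $\|[\cld,f]\|=\max_{\mu}|f(\mu)-f(\mu+1)|$ at once from the off-diagonal block structure and diagonality of each block, where the paper instead verifies $\|[\cld,g](\Xi)\|\leq\|\Xi\|$ by expanding a general vector $\Xi$ --- but the substance is identical.
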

\begin{proof}
     We denote both the Hilbert space norm and the operator norm by $||\cdot||$. Note that $\{\chi_{\mu\raro\nu}\}_{\mu\raro\nu\in E}$ forms an orthogonal set with respect to the inner product $\langle,\rangle_{\cle}$. Moreover $\langle\chi_{\mu\raro\nu},\chi_{\mu\raro\nu}\rangle_{\cle}=\frac{1}{n}$. Therefore $\{\sqrt{n}\chi_{\mu\raro\nu}\}_{\mu\raro\nu\in E}$ forms an orthonormal $\mathbb{C}$-linear basis of $L^{2}(\cle)$ with respect to the inner product $\langle,
     \rangle_{\cle}$. With this observation let $f\in\cla$ such that $||[\cld,f]||\leq 1$. For any $\mu\raro\nu\in E$, $||\sqrt{n}\chi_{\mu\raro\nu}||=1$ in $\clh$. Then $||[\cld,f](\sqrt{n}\chi_{\mu\raro\nu})||\leq||[\cld,f]||\leq 1$. Note that for any left module map $\zeta:\cle\raro\Omega^{0,1}\bar{\ot}\cle$, $[\zeta,f]=0=[\zeta^{\dagger},f]$ on appropriate subspaces. Also for any $\chi_{\mu\raro\nu}$, $\bar{\partial}_{\cle}^{\dagger}(\chi_{\mu\raro\nu})=0$. With these observations using the fact that $\langle\xi_{\mu+1\raro\mu}\ot\chi_{\mu\raro\nu},\xi_{\mu+1\raro\mu}\ot\chi_{\mu\raro\nu}\rangle_{\cle}=\frac{1}{n}$, we get the following
    \begin{equation*}
      \begin{aligned}
        [\cld,f](\sqrt{n}\chi_{\mu\raro\nu})&=\sqrt{n}\Big(f(\mu)\cld(\chi_{\mu\raro\nu})-f.(\xi_{\mu+1\raro\mu}\ot\chi_{\mu\raro\nu})\Big)\\
        &= \sqrt{n}\Big(f(\mu)-f(\mu+1)\Big)\Big(\xi_{\mu+1\raro\mu}\ot\chi_{\mu\raro\nu}\Big).\\
        ||[\cld,f](\sqrt{n}\chi_{\mu\raro\nu})||&=\sqrt\Big\langle\sqrt{n}\Big(f(\mu)-f(\mu+1)\Big)\Big(\xi_{\mu+1\raro\mu}\ot\chi_{\mu\raro\nu}\Big),\\
       &\qquad\sqrt{n}\Big(f(\mu)-f(\mu+1)\Big)\Big(\xi_{\mu+1\raro\mu}\ot\chi_{\mu\raro\nu}\Big)\Big\rangle_{\cle} \\
       &= |f(\mu)-f(\mu+1)|.
      \end{aligned}
    \end{equation*} 
     Therefore for any $f\in\cla$ such that $||[\cld,f]||\leq 1$, $|f(\mu)-f(\mu+1)|\leq 1$. Hence by definition, $d(\mu,\mu+1)\leq 1$ for all $\mu=1,2,\ldots,n$. Now we shall show that the upper bound is actually attained. To that end for a fixed vertex $\mu$, choose a function $g$ on $V$ that takes the value $1$ on $\mu$ and $0$ on $\mu+1$. Recalling the orthonormal basis of $\clh$ from \ref{onb1}, for any element $\Xi=\sum_{\theta,\nu:\theta\raro\nu\in E}c_{\theta}^{\nu}\sqrt{n}\chi_{\theta\raro\nu}+\sum_{\lambda,\nu^{\prime}:\lambda\raro\nu^{\prime}\in E}d_{\lambda}^{\nu^{\prime}}\sqrt{n}\xi_{\lambda+1\raro\lambda}\ot\chi_{\lambda\raro\nu^{\prime}}\in\clh$ such that $||\Xi||\leq 1$, we have
     \begin{equation}
     \label{eqn1}\sum_{\theta,\nu:\theta\raro\nu\in E}|c_{\theta}^{\nu}|^2+\sum_{\lambda,\nu^{\prime}:\lambda\raro\nu^{\prime}\in E}|d_{\lambda}^{\nu^{\prime}}|^2\leq 1.
     \end{equation} 
     Then again using $[\zeta,g]=[\zeta^{\dagger},g]=0$ on appropriate subspaces,
     \begin{align*}
         [\cld,g](\Xi)&=(\nabla_{0}+\nabla_{0}^{\dagger}) g.\left(\sum_{\substack{\theta,\nu \\ \theta\raro\nu\in E}}c_{\theta}^{\nu}\sqrt{n}\chi_{\theta\raro\nu}+\sum_{\substack{\lambda,\nu^{\prime} \\ \lambda\raro\nu^{\prime}\in E}}d_{\lambda}^{\nu^{\prime}}\sqrt{n}\xi_{\lambda+1\raro\lambda}\ot\chi_{\lambda\raro\nu^{\prime}}\right)\\
         &\quad-g.(\nabla_{0}+\nabla_{0}^{\dagger})\left(\sum_{\substack{\theta,\nu \\ \theta\raro\nu\in E}}c_{\theta}^{\nu}\sqrt{n}\chi_{\theta\raro\nu}+\sum_{\substack{\lambda,\nu^{\prime} \\ \lambda\raro\nu^{\prime}\in E}}d_{\lambda}^{\nu^{\prime}}\sqrt{n}\xi_{\lambda+1\raro\lambda}\ot\chi_{\lambda\raro\nu^{\prime}}\right)\\
         &= (\nabla_{0}+\nabla_{0}^{\dagger})\left(\sum_{\substack{\nu:\\ \mu\raro\nu\in E}}c_{\mu}^{\nu}\sqrt{n}\chi_{\mu\raro\nu}+\sum_{\substack{\nu^{\prime} \\ \mu-1\raro\nu^{\prime}\in E}}d_{\mu-1}^{\nu^{\prime}}\sqrt{n}\xi_{\mu\raro\mu-1}\ot\chi_{\mu-1\raro\nu^{\prime}}\right)\\
         &\quad- g.\left(\sum_{}c^{\nu}_{\theta}\sqrt{n}\xi_{\theta+1\raro\theta}\ot\chi_{\theta\raro\nu}+\sum_{}d_{\lambda}^{\nu^{\prime}}\sqrt{n}\chi_{\lambda\raro\nu^{\prime}}\right)\\
         &= \left(\sum_{\nu:\mu\raro\nu\in E}c_{\mu}^{\nu}\sqrt{n}\xi_{\mu+1\raro\mu}\ot \chi_{\mu\raro\nu}+\sum_{\nu^{\prime}:\mu-1\raro\nu^{\prime}\in E}d_{\mu-1}^{\nu^{\prime}}\sqrt{n}\chi_{\mu-1\raro\nu^{\prime}}\right)\\
         &\quad- \left(\sum_{\nu:\mu-1\raro\nu\in E}c^{\nu}_{\mu-1}\sqrt{n}\xi_{\mu\raro\mu-1}\ot\chi_{\mu-1\raro\nu}+\sum_{\nu^{\prime}:\mu\raro\nu^{\prime}\in E}d_{\mu}^{\nu^{\prime}}\sqrt{n}\chi_{\mu\raro\nu^{\prime}}\right)
     \end{align*} 
Therefore \begin{displaymath}||[\cld,g](\Xi)||^2=\sum_{\substack{\nu: \\ \mu\raro\nu\in E}}|c_{\mu}^{\nu}|^{2}+\sum_{\substack{\nu:\\ \mu-1\raro\nu\in E}}|c_{\mu-1}^{\nu}|^{2}+\sum_{\substack{\nu:\\ \mu\raro\nu\in E}}|d_{\mu}^{\nu}|^{2}+\sum_{\substack{\nu:\\ \mu-1\raro\nu\in E}}|d_{\mu-1}^{\nu}|^{2}.\end{displaymath} Then by inequality (\ref{eqn1}), 
     \begin{displaymath}
         ||[\cld,g](\Xi)||\leq 1,
     \end{displaymath} which implies that $||[\cld,g]||\leq 1$. But $|g(\mu)-g(\mu+1)|=1$. Hence $d(\mu,\mu+1)=1$ for all $\mu=1,2,\ldots,n$. From the proof it is clear that the distance does not depend upon any choice of left module map $\zeta$ to define the holomorphic Dolbeault-Dirac operator $\cld$.
\end{proof}
We have the following easy to see corollary:
\begin{corollary}
    \label{diameter}
    For any two points $\mu,\nu\in V$, $d(\mu,\nu)\leq[\frac{n}{2}]$. In particular, $V$ has finite diameter.
\end{corollary}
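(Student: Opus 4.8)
The plan is to leverage Lemma~\ref{connes-distance} together with the fact that Connes' distance is a genuine metric on the state space, in particular that it obeys the triangle inequality and is symmetric. First I would observe that the defining formula $d(\phi,\psi)={\rm sup}_{f:||[\cld,f]||\leq 1}|\phi(f)-\psi(f)|$ is manifestly symmetric in $\phi,\psi$, and that it satisfies the triangle inequality: for any admissible $f$ and any third state $\rho$ one has $|\phi(f)-\psi(f)|\leq|\phi(f)-\rho(f)|+|\rho(f)-\psi(f)|$, and taking the supremum over admissible $f$ gives $d(\phi,\psi)\leq d(\phi,\rho)+d(\rho,\psi)$.

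Next I would recall that the vertices are labelled by natural numbers modulo $n$ and that Lemma~\ref{connes-distance} gives $d(\mu,\mu+1)=1$ for every $\mu$. By symmetry of $d$ this also reads $d(\mu,\mu-1)=1$, so a single step by $\pm 1$ around the cycle costs exactly one unit.

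For arbitrary $\mu,\nu\in V$, I would connect them along the shorter of the two arcs of the cycle $1\raro 2\raro\cdots\raro n\raro 1$; the number of edges in this shorter arc is $\min\{|\mu-\nu|,\,n-|\mu-\nu|\}$, which never exceeds $[\frac{n}{2}]$. Writing this path as $\mu=\mu_0,\mu_1,\ldots,\mu_k=\nu$ with each $\mu_{j+1}=\mu_j\pm 1$ and $k\leq[\frac{n}{2}]$, iterated application of the triangle inequality together with $d(\mu_j,\mu_{j+1})=1$ yields
\[
    d(\mu,\nu)\leq\sum_{j=0}^{k-1}d(\mu_j,\mu_{j+1})=k\leq\left[\frac{n}{2}\right].
\]
Since this bound is uniform over all pairs of vertices, $V$ has finite diameter.

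As for difficulty, there is essentially no analytic obstacle here: the entire content of the corollary is carried by Lemma~\ref{connes-distance}, and the remaining argument is the elementary observation that the metric structure of $V$ is controlled by the cyclic nearest-neighbour distances. The only point demanding a word of care is that $d$ is a bona fide metric, so that symmetry and the triangle inequality are available; this is standard for Connes' distance and requires no new verification in the finite-dimensional setting at hand.
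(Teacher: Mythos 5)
Your proof is correct and is precisely the argument the paper intends: the corollary is stated as ``easy to see'' with no written proof, and the intended route is exactly your combination of Lemma~\ref{connes-distance} with the symmetry and triangle inequality of Connes' distance applied along the shorter arc of the cycle, which has at most $[\frac{n}{2}]$ edges.
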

{\bf Acknowledgement}: The first author is supported by SERB MATRICS grant (grant number MTR/2022/000515), Government of India. The first author would like to thank Md. Ali Zinna for fruitful discussion, particularly concerning the proof of Lemma \ref{Hermitian}. 
\begin{bibdiv}
   \begin{biblist}
   \bib{debashish}{article}{
   author={Bhattacharjee S.},
   author={Goswami D.},
   title={Complex structures on three point space}, 
   journal={QP-PQ:Quantum probability and White noise analysis, IDAQP}, 
   volume={}, 
   date={2023}, 
   pages={77-86}
   }
   \bib{Kahler_Buachalla}{article}{
   author={Buachalla R Ò},
   title={Noncommutative K$\ddot{a}$hler structures on quantum homogeneous spaces}, 
   journal={Adv. Math.}, 
   volume={322}, 
   date={2017}, 
   pages={892-939}
   }
   \bib{Somberg}{article}{
   author={Buachalla R. Ò},
   author={Das B.},
   author={Somberg P.},
   title={Compact Quantum Homogeneous K$\ddot{a}$hler Spaces}, 
   journal={arXiv:1910.14007v3}, 
   }
   \bib{Das_Kahler}{article}{
   author={Buachalla R. Ò},
   author={Das B.},
   author={Somberg P.},
   title={Spectral gaps for twisted Dolbeault-Dirac operators over the irreducible quantum flag manifolds}, 
   journal={arXiv:2206.10719}, 
   }
   \bib{Das_Somberg}{article}{
   author={Buachalla R. Ò},
   author={Das B.},
   author={Somberg P.},
   title={A Dolbeault-Dirac spectral triple for quantum projective space}, 
   journal={Documenta Math.}, 
   volume={25},
   date={2020},
   pages={1079-1157}
   }
   \bib{Kodaira}{article}{
   author={Buachalla R Ò},
   author={Stovicek J.},
   author={Roosmalen van A.},
   title={A Kodaira vanishing theorem for noncommutative K$\ddot{a}$hler structure}, 
   journal={arXiv: 1801.08125}, 
   }
    \bib{Beggs}{article}{
   author={Beggs E.},
   author={Smith S. Paul},
   title={Noncommutative complex differential geometry}, journal={J. Geom. Phys.}, 
   volume={72}, 
   date={2013}, 
   pages={7-33}
   }
   \bib{Connes}{article}{
   author={Connes A.},
   title={Noncommutative Geometry}, 
   journal={Academic Press}, 
   }
   \bib{Connescuntz}{article}{
   author={Connes A.},
   author={Cuntz J.},
   title={Quasi homomorphisms, cohomologie cyclique et positivité}, 
   journal={Comm. Math. Phys.}, 
   volume={114}, 
   date={1988}, 
   pages={515-526}
   }
   \bib{NCTori}{article}{
   author={Dabrowski L.},
   author={Krajewski T.},
   author={Landi G.},
   title={Some Properties of Non-linear $\sigma$-Models in Noncommutative Geometry}, 
   journal={Int. J. Mod. Phys.}, 
   volume={B14}, 
   date={2000}, 
   pages={2367-2382}
   }
   \bib{Sou_Ati1}{article}{
   author={Joardar S.},
   author={Rahaman A.},
   title={Almost complex structures on finite points from bidirected graphs}, 
   journal={J. Noncommut. Geom.}, 
   pages={DOI 10.4171/JNCG/577}
   }
   \bib{Khalkhali}{article}{
   author={Khalkhali M.},
   title={Basic noncommutative geometry},
   journal={European Mathematical society},
   pages={Second edition}
   }


    \bib{Khalkhali-NCGQspace}{article}{
    author = {Khalkhali M.},
    author = {Moatadelro A.},
title = {Noncommutative complex geometry of the quantum projective space},
journal = {J. Geom. Phys. },
volume = {61},
number = {12},
pages = {2436-2452},
year = {2011},
issn = {0393-0440},
doi = {https://doi.org/10.1016/j.geomphys.2011.08.004},
url ={https://www.sciencedirect.com/science/article/pii/S0393044011001999},
}
  
   \bib{Majid}{article}{
   author={Majid S.},
   title={Noncommutative Riemannian geometry on graphs}, 
   journal={J. Geom. Phys.}, 
   volume={69}, 
   date={2013}, 
   pages={74-93}
   }
   \bib{Schwartz}{article}{
   author={Polischuk A.},
   author={Schwarz A.},
   title={Categories of holomorphic vector bundles on noncommutative two-tori}, 
   journal={Comm. Math. Phys.}, 
   volume={236(1)}, 
   date={2003}, 
   pages={135-159}
   }
   \bib{Staford}{article}{
   author={Stafford J.T.},
   author={Van Der Bergh M.},
   title={Noncommutative curves and noncommutative surfaces}, 
   journal={Bull. Amer. Math. Society}, 
   volume={38}, 
   date={2001}, 
   pages={171-216}
   }
   \end{biblist}
\end{bibdiv}
\end{document}